\documentclass[12pt,a4paper]{article}

\usepackage{amsmath,amssymb,amsfonts,mathtools,bm}
\usepackage{geometry}
\usepackage{hyperref}
\hypersetup{colorlinks=true, linkcolor=blue, citecolor=blue}
\usepackage[nameinlink,capitalise]{cleveref}

\newcommand{\Sph}{\mathbb S}
\newcommand{\RP}{\mathbb{RP}}
\newcommand{\R}{\mathbb R}
\newcommand{\N}{\mathbb N}
\newcommand{\K}{\mathcal K}
\newcommand{\Harm}{\mathcal H}
\newcommand{\cB}{\mathcal B}

\newcommand{\Id}{\operatorname{Id}}
\newcommand{\sym}{\operatorname{sym}}

\usepackage{amsthm}
\theoremstyle{plain}
\newtheorem{theorem}{Theorem}[section]
\newtheorem{proposition}[theorem]{Proposition}

\newtheorem{corollary}[theorem]{Corollary}

\theoremstyle{definition}
\newtheorem{definition}[theorem]{Definition}
\newtheorem{example}[theorem]{Example}
\newtheorem{remark}[theorem]{Remark}

\begin{document}

% ============================================================
% 标题（手动排版）
% ============================================================
\begin{center}
{\LARGE \textbf{Exact Finite Integral Geometry:}}\\[0.2cm]
{\LARGE \textbf{Directional Kernels and Spherical Designs}}\\[0.6cm]
{\large Congpei An}\\[0.2cm]
{\normalsize School of Mathematics, Guangxi University, Nanning, China}\\[0.1cm]
{\normalsize \href{mailto:andbachcp@gmail.com}{andbachcp@gmail.com}}\\[0.5cm]
\end{center}

% ============================================================
% 摘要
% ============================================================
\begin{abstract}
We study when an invariant directional average from integral geometry can be replaced exactly by finitely many directions. For an even continuous kernel $\psi:[-1,1]\to\R$, let $T_\psi$ be the associated zonal convolution operator on the sphere and let $\cB_\psi K$ be the corresponding surface-area observable of a convex body $K\subset\R^d$. Our first result is a sharp norm identity: the worst relative error over all convex bodies equals one half of the $L^\infty$ norm of the potential discrepancy $T_\psi(\mu-\sigma)$, for every normalized signed directional measure $\mu$. Hence universal exactness is equivalent to $\mu-\sigma\in\ker T_\psi$. Funk--Hecke diagonalization then gives a complete spectral criterion: one must annihilate exactly the spherical harmonic degrees on which the multiplier of $T_\psi$ is nonzero. This yields kernel-adapted designs and positive finite exact rules for finite active spectrum. For $\psi_p(s)=|s|^p$ we obtain a complete classification: even powers give precisely weighted real projective designs, whereas non-even powers admit no finite signed atomic rule that is exact for every convex body. For the classical Cauchy kernel $|s|$, exactness fails but spherical $t$-designs give a uniform $O(t^{-1})$ relative surface-area error. We also derive exact projection-moment identities for rectifiable submanifolds.
\end{abstract}

\vspace{0.3cm}
\hrule
\vspace{0.3cm}

% ============================================================
% 正文开始
% ============================================================
\section{Introduction}

A characteristic feature of integral geometry is the recovery of intrinsic geometric information from invariant averages over extrinsic configurations. Crofton formulas average intersection counts, Cauchy formulas average projection data, and kinematic formulas average intersection invariants over relative positions. Santal\'o's classical treatment makes this viewpoint systematic \cite{Santalo}. In geometric tomography, projection and section data are encoded by rotation-covariant integral transforms; see Gardner \cite{Gardner}. Multiplier representations of broad classes of such transforms are classical, notably in Kiderlen's framework \cite{Kiderlen} and in the harmonic analysis of cosine-type transforms \cite{GYY}.

The problem considered here has a simple harmonic-analytic formulation but a genuinely geometric quantifier. We ask:
\begin{quote}
\emph{When does a directional integral-geometric identity admit an exact finite realization for every convex body?}
\end{quote}
At the analytic level, the relevant object is a zonal convolution operator $T_\psi$ on the sphere. At the geometric level, the test class consists of all surface area measures of full-dimensional convex bodies. The key observation is that these surface area measures form a norming family for even continuous functions. This turns the universal convex-body problem into an exact operator-norm identity, after which Funk--Hecke diagonalization identifies the nullspace spectrally. Thus the answer is governed not by a single global design degree, but by the harmonic spectrum actually seen by the geometric kernel.

Let $\sigma$ be normalized rotation-invariant measure on $\Sph^{d-1}$ and let $S_K$ denote the surface area measure of a convex body $K\subset\R^d$. For an even continuous kernel $\psi:[-1,1]\to\R$, define
\begin{equation}\label{eq:kernel-observable-intro}
 \cB_\psi K(u)=\frac12\int_{\Sph^{d-1}}\psi(\langle u,v\rangle)\,dS_K(v).
\end{equation}
The case $\psi(s)=|s|$ is the classical Cauchy brightness function $\cB_{|\cdot|}K(u)=\operatorname{vol}_{d-1}(K|u^\perp)$. For a normalized finite signed Borel measure $\mu$ on the sphere, set
\[
 \Delta_{\psi,\mu}(K):=\int\cB_\psi K\,d\mu-\int\cB_\psi K\,d\sigma.
\]

Our main contributions are threefold.
\begin{enumerate}
\item \emph{A sharp norm identity.} We prove
\begin{equation}\label{eq:defect-intro}
 \sup_{K\in\K^d_\circ}\frac{|\Delta_{\psi,\mu}(K)|}{S(K)}
 =\frac12\|T_\psi(\mu-\sigma)\|_{L^\infty(\Sph^{d-1})}.
\end{equation}
The upper bound is immediate; the reverse inequality is the geometric step. Minkowski's existence theorem allows normalized surface area measures to approximate an antipodal point mass while preserving full-dimensional support. Equivalently, normalized surface area measures are a norming family for the even subspace of $C(\Sph^{d-1})$. Thus the entire worst-case convex-body defect is exactly the $L^\infty$ discrepancy of a zonal convolution potential.

\item \emph{Identification of the nullspace of the geometric averaging problem.} Let $\lambda_\ell(\psi)$ denote the Funk--Hecke multiplier on $\Harm_\ell$ and set $\Lambda_\psi=\{\ell\ge1:\lambda_\ell(\psi)\ne0\}$. From \eqref{eq:defect-intro}, universal exactness is equivalent to $\mu-\sigma\in\ker T_\psi$. Funk--Hecke diagonalization yields the precise condition
\begin{equation}\label{eq:criterion-intro}
 \Delta_{\psi,\mu}(K)=0\ \text{for every }K\in\K^d_\circ
 \quad\Longleftrightarrow\quad
 \int Y\,d\mu=0\ \text{for all }Y\in\Harm_\ell,\ \ell\in\Lambda_\psi.
\end{equation}
Hence the kernel itself selects the exact moment conditions. This leads to kernel-adapted harmonic-index designs, and finite active spectrum automatically yields a positive finite exact rule by a Carath\'eodory argument.

\item \emph{A model classification and the transition beyond exactness.} For $\psi_p(s)=|s|^p$ we compute
\begin{equation}\label{eq:power-mult-intro}
 \lambda_{2n}(|\cdot|^p)
 =c_{d,p}\frac{(-1)^n(-p/2)_n}{((d+p)/2)_n},
 \qquad \lambda_{2n+1}(|\cdot|^p)=0,
\end{equation}
where $c_{d,p}=\Gamma(d/2)\Gamma((p+1)/2)/(\sqrt\pi\,\Gamma((d+p)/2))$. If $p=2m$, the active spectrum is $\{2,4,\ldots,2m\}$ and exactness is precisely a weighted real projective $m$-design condition. If $p>0$ is not even, every positive even degree is active and no finite signed atomic rule is universally exact. For the classical Cauchy kernel, spherical $t$-designs nevertheless give a uniform error bounded by $E_t(|\cdot|)_\infty S(K)$, hence a relative $O(t^{-1})$ estimate. We also obtain exact projection-moment identities for rectifiable submanifolds.
\end{enumerate}

The relation between even powers, projective cubature and finite-dimensional $\ell_p$ embeddings is classical \cite{LyubichVaserstein,Lyubich2009,Lindblad}; likewise, harmonic-index designs are established objects \cite{BOT,ZhuEtAl}. Our contribution is not the sphere--projective correspondence or the even-exponent phenomenon in isolation. It is the universal convex-body defect identity \eqref{eq:defect-intro}, the resulting kernel-by-kernel characterization \eqref{eq:criterion-intro}, and the interpretation of the active harmonic spectrum as the exact finite design requirement for a directional integral-geometric observable. This perspective is complementary to multiplier theory in geometric tomography \cite{Gardner,Kiderlen,GYY}, to $L_p$ cosine-transform methods \cite{LiXiZhang}, and to numerical discretization of integral-geometric formulas on groups and Grassmannians \cite{Pausinger,PausingerSvane}.

The paper is intentionally restricted to directional geometry on $\Sph^{d-1}/\{\pm1\}\cong\RP^{d-1}$. Genuine Crofton and kinematic formulas on Grassmannians and motion groups require a broader representation-theoretic setting. The directional case already exhibits the central mechanism in a complete form and provides the first layer of the broader program that we call \emph{Exact Finite Integral Geometry}.

\section{Harmonic and convex-geometric preliminaries}

Throughout, $d\ge2$.  Let $\sigma$ denote normalized Haar probability measure on $\Sph^{d-1}$.  We write $\Harm_\ell=\Harm_\ell(\R^d)$ for the restrictions to the sphere of real homogeneous harmonic polynomials of degree $\ell$.  Its dimension is
\begin{equation}\label{eq:dim-harm}
 h_\ell^{(d)}
 =\binom{d+\ell-1}{\ell}-\binom{d+\ell-3}{\ell-2},
\end{equation}
with the second binomial interpreted as zero for $\ell<2$.

Every $f\in L^2(\Sph^{d-1})$ has an orthogonal spherical harmonic expansion
\[
 f\sim\sum_{\ell=0}^\infty f_\ell,
 \qquad f_\ell\in\Harm_\ell.
\]
If $f$ is even, only even degrees occur.

\paragraph{Funk--Hecke multipliers.}

For $\psi\in C([-1,1])$ define the zonal convolution operator
\begin{equation}\label{eq:Tpsi}
 (T_\psi\nu)(v)
 :=\int_{\Sph^{d-1}}\psi(\langle u,v\rangle)\,d\nu(u)
\end{equation}
for any finite signed Borel measure $\nu$ on $\Sph^{d-1}$.  When $\nu=f\sigma$, we also write $T_\psi f$.

The Funk--Hecke theorem says that each $\Harm_\ell$ is an eigenspace.  We normalize the multiplier $\lambda_\ell(\psi)$ by
\begin{equation}\label{eq:FH}
 \int_{\Sph^{d-1}}\psi(\langle u,v\rangle)Y(v)\,d\sigma(v)
 =\lambda_\ell(\psi)Y(u),
 \qquad Y\in\Harm_\ell.
\end{equation}
In particular,
\begin{equation}\label{eq:lambda0}
 \lambda_0(\psi)
 =\int_{\Sph^{d-1}}\psi(\langle u,e\rangle)\,d\sigma(u),
\end{equation}
independent of $e\in\Sph^{d-1}$.

\begin{definition}[Active spectrum]\label{def:active}
For $\psi\in C([-1,1])$, define
\[
 \Lambda_\psi
 :=\{\ell\in\N:\lambda_\ell(\psi)\ne0\}.
\]
If $\psi$ is even, then $\Lambda_\psi\subset2\N$.
\end{definition}

\paragraph{Surface area measures.}

Let $\K^d$ be the class of convex bodies in $\R^d$ and $\K^d_\circ$ the subclass with nonempty interior.  The surface area measure $S_K$ of $K\in\K^d_\circ$ is the pushforward of $(d-1)$-dimensional Hausdorff measure on $\partial K$ under the outer Gauss map.  Its total mass is the surface area
\[
 S(K):=S_K(\Sph^{d-1}).
\]
We use the following classical form of Minkowski's existence theorem; see \cite[Sec.~8.2]{Schneider}.

\begin{theorem}[Minkowski existence theorem]\label{thm:minkowski}
Let $\eta$ be a finite positive Borel measure on $\Sph^{d-1}$ that is not concentrated on any great subsphere and satisfies
\[
 \int_{\Sph^{d-1}}u\,d\eta(u)=0.
\]
Then there exists a convex body $K\in\K^d_\circ$, unique up to translation, such that $S_K=\eta$.
\end{theorem}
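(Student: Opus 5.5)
The statement is the classical Minkowski existence theorem, which the paper cites from \cite[Sec.~8.2]{Schneider} rather than proving. If a proof were required, I would follow the standard variational route: reduce to discrete measures via polytopes, then pass to general measures by approximation and Blaschke selection.

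\emph{Necessity and uniqueness.} First recall why the hypotheses are natural. Applying the divergence theorem to constant vector fields gives $\int u\,dS_K(u)=0$. A body with nonempty interior cannot have $S_K$ concentrated on a great subsphere $e^\perp\cap\Sph^{d-1}$: the projection $K|e^\perp$ has positive $(d-1)$-volume and equals $\tfrac12\int|\langle u,e\rangle|\,dS_K(u)$. Uniqueness up to translation follows from Minkowski's inequality $V_1(K,L)^d\ge V(K)^{d-1}V(L)$ together with its equality case. If $S_K=S_L$, then $V_1(K,L)=V(L)$ and $V_1(L,K)=V(K)$, which forces $V(K)=V(L)$ and equality in the inequality, so $K$ and $L$ are homothetic. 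Equal volumes then make them translates.

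\emph{Existence for discrete measures.} Let $\eta=\sum_i a_i\delta_{u_i}$ with $a_i>0$, the $u_i$ spanning, and $\sum a_iu_i=0$. For $h\in\R^N_{\ge0}$ set $P(h)=\bigcap_i\{x:\langle x,u_i\rangle\le h_i\}$. Maximize $V(P(h))$ subject to $\sum a_ih_i=1$. The balancing condition makes the constraint translation invariant, and the spanning condition makes the feasible polytopes uniformly bounded, so a maximizer exists. At the maximizer, the Lagrange condition uses $\partial V/\partial h_i=\operatorname{vol}_{d-1}(F_i)$, where $F_i$ is the facet with normal $u_i$. This gives $\operatorname{vol}_{d-1}(F_i)=\lambda a_i$ for every $i$, so all facets are present. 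Rescaling by $\lambda^{-1/(d-1)}$ produces $S_P=\eta$.

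\emph{General measures.} Approximate $\eta$ weakly by discrete measures $\eta_j$ that are balanced, not concentrated on a great subsphere, and have bounded total mass. One way is to partition the sphere finely, replace each cell by a point mass at its centroid direction, and restore balance with small correction masses. Take the corresponding polytopes $P_j$ from the discrete step.

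\emph{Main obstacle.} The key step is a uniform bound on $\operatorname{diam}P_j$ together with a positive lower bound on $V(P_j)$. For the diameter, use
\[
 \operatorname{vol}_{d-1}(P_j|v^\perp)=\tfrac12\int|\langle u,v\rangle|\,d\eta_j(u)\ge c>0,
\]
where $c$ is uniform in $v$ because weak convergence transfers the non-concentration of $\eta$ to the $\eta_j$. Combining this with $V(P_j)\le C\,S(P_j)^{d/(d-1)}$ and the estimate $V(P_j)\gtrsim \operatorname{diam}(P_j)\cdot\min_v\operatorname{vol}_{d-1}(P_j|v^\perp)$ bounds the diameters. Translate the $P_j$ so they contain the origin. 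By Blaschke selection a subsequence converges to some $K$, and $K$ has nonempty interior because its projections have positive volume. Weak continuity of $K\mapsto S_K$ then gives $S_K=\eta$.
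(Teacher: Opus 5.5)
Your proposal is correct and is essentially the route the paper relies on: the paper gives no proof of its own but cites \cite[Sec.~8.2]{Schneider}, whose argument is your scheme. That scheme is volume maximization with a Lagrange condition for balanced discrete measures, then discrete approximation, a uniform diameter bound from projection volumes and the isoperimetric inequality, Blaschke selection and weak continuity of $K\mapsto S_K$, with uniqueness from the equality case of Minkowski's inequality. The details you leave implicit are routine: translate the maximizer so that all $h_i>0$ before applying Lagrange, and note that $\lambda=dV(P)>0$ by Euler's relation $\sum_i h_i\,\partial V/\partial h_i=dV$, so every prescribed facet actually appears.
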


For our purposes the key consequence is a separation principle.

\paragraph{Spherical, harmonic-index, and projective designs.}

A normalized finite measure
\[
 \mu=\sum_{j=1}^Nw_j\delta_{x_j},\qquad \sum_jw_j=1,
\]
is a weighted spherical $t$-design if it agrees with $\sigma$ on all spherical polynomials of degree at most $t$.  Equivalently,
\begin{equation}\label{eq:spherical-design-harm}
 \int Y\,d\mu=0,
 \qquad Y\in\Harm_\ell,
 \quad 1\le\ell\le t.
\end{equation}
For equal weights this is the standard spherical design condition \cite{DGS,Seidel}.

More generally, if $T\subset\N$, we say that $\mu$ is a \emph{weighted harmonic-index $T$ design} if
\begin{equation}\label{eq:Tdesign}
 \int Y\,d\mu=0
 \qquad \forall\,Y\in\Harm_\ell,\ \ell\in T.
\end{equation}
The equal-weight notion for one index was introduced in \cite{BOT}; multiple index sets are studied in \cite{ZhuEtAl}.

When only even degrees are relevant, the geometry is naturally projective.  The quotient map
\[
 \pi:\Sph^{d-1}\to\RP^{d-1},\qquad \pi(u)=[u],
\]
identifies $u$ and $-u$.  Polynomial functions of projective degree at most $m$ pull back to even spherical polynomials of degree at most $2m$.  Thus annihilation of $\Harm_2,\Harm_4,\ldots,\Harm_{2m}$ is the weighted real projective $m$-design condition; see, for example, \cite{Lyubich2009,Lindblad}.

\section{A sharp norm identity and the nullspace of the zonal operator}

Let $\psi\in C([-1,1])$ be even.  For $K\in\K^d_\circ$, define
\begin{equation}\label{eq:Bpsi}
 \cB_\psi K(u)
 :=\frac12\int_{\Sph^{d-1}}\psi(\langle u,v\rangle)\,dS_K(v).
\end{equation}
Because $\psi$ is even, $\cB_\psi K(u)=\cB_\psi K(-u)$, so the directional variable is intrinsically projective.

\begin{proposition}[Continuous invariant average]\label{prop:continuous-average}
For every $K\in\K^d_\circ$,
\begin{equation}\label{eq:continuous-average}
 \int_{\Sph^{d-1}}\cB_\psi K(u)\,d\sigma(u)
 =\frac{\lambda_0(\psi)}2\,S(K).
\end{equation}
\end{proposition}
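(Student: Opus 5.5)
The plan is to integrate the defining formula \eqref{eq:Bpsi} against $\sigma$ and exchange the order of integration. Since $\psi$ is continuous on $[-1,1]$, it is bounded. Both $\sigma$ and $S_K$ are finite positive Borel measures on $\Sph^{d-1}$; here $S(K)<\infty$ because $K$ is compact and convex. Hence the function $(u,v)\mapsto\psi(\langle u,v\rangle)$ is bounded and continuous on $\Sph^{d-1}\times\Sph^{d-1}$, so it is integrable for the product measure $\sigma\otimes S_K$, and Fubini's theorem applies. This gives
\[
 \int_{\Sph^{d-1}}\cB_\psi K(u)\,d\sigma(u)
 =\frac12\int_{\Sph^{d-1}}\Bigl(\int_{\Sph^{d-1}}\psi(\langle u,v\rangle)\,d\sigma(u)\Bigr)dS_K(v).
\]

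Next we evaluate the inner integral. By \eqref{eq:lambda0}, for each fixed $v\in\Sph^{d-1}$ it equals $\lambda_0(\psi)$. The only point to justify is that this value does not depend on $v$. That follows from rotation invariance of $\sigma$: if $v=Re$ for some $R\in SO(d)$ (or $O(d)$), then $\langle u,Re\rangle=\langle R^{-1}u,e\rangle$, and $\sigma$ is invariant under $R^{-1}$. Equivalently, we can read this off \eqref{eq:FH} applied with $\ell=0$ and $Y\equiv1$, using the symmetry of $\psi(\langle u,v\rangle)$ in $u$ and $v$.

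Substituting this constant back in, the outer integral becomes $\frac12\lambda_0(\psi)\,S_K(\Sph^{d-1})=\frac{\lambda_0(\psi)}2S(K)$, which is \eqref{eq:continuous-average}. There is no real obstacle here. The only step requiring care is justifying Fubini, and that is immediate from boundedness of $\psi$ together with finiteness of both measures. Evenness of $\psi$ is not needed for this identity.
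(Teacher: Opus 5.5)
Your proof is correct and is the paper's argument: use Fubini to swap the integrals, then use rotation invariance of $\sigma$ to identify the inner integral with $\lambda_0(\psi)$ for every $v$, which leaves $\frac{\lambda_0(\psi)}{2}S_K(\Sph^{d-1})$. Your added justification of Fubini (a bounded continuous integrand against finite measures) and your remark that evenness of $\psi$ is not needed are both accurate.
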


\begin{proof}
By Fubini and rotational invariance,
\[
 \begin{aligned}
 \int \cB_\psi K(u)\,d\sigma(u)
 &=\frac12\int_{\Sph^{d-1}}
 \left[\int_{\Sph^{d-1}}\psi(\langle u,v\rangle)\,d\sigma(u)\right]dS_K(v)\\
 &=\frac{\lambda_0(\psi)}2S(K).
 \end{aligned}
\]
\end{proof}

We first identify the exact worst-case error over the whole class of convex bodies.

\begin{theorem}[Sharp Defect Theorem]\label{thm:sharp-defect}
Let $\psi\in C([-1,1])$ be even and let $\mu$ be a finite signed Borel measure on $\Sph^{d-1}$ with $\mu(\Sph^{d-1})=1$. Then
\begin{equation}\label{eq:sharp-defect}
 \boxed{
 \sup_{K\in\K^d_\circ}
 \frac{1}{S(K)}
 \left|\int\cB_\psi K\,d\mu-\int\cB_\psi K\,d\sigma\right|
 =\frac12\|T_\psi(\mu-\sigma)\|_\infty .
 }
\end{equation}
\end{theorem}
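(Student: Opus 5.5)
The plan is to write the defect via Fubini and then use Minkowski's existence theorem (\cref{thm:minkowski}) to show the resulting bound is attained in the limit.

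\emph{Fubini step.} Set $\nu=\mu-\sigma$. For $K\in\K^d_\circ$, apply Fubini to \eqref{eq:Bpsi}, using that $\psi$ is bounded, $|\mu|$ is finite and $S_K$ is finite:
\[
 \int\cB_\psi K\,d\mu-\int\cB_\psi K\,d\sigma
 =\frac12\int_{\Sph^{d-1}}(T_\psi\nu)(v)\,dS_K(v).
\]
Here $T_\psi\nu$ is continuous by uniform continuity of $\psi$ on $[-1,1]$. It is also even, since $\psi$ is even.

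\emph{Upper bound.} Since $S_K\ge0$ has total mass $S(K)$, the identity above gives
\[
 |\Delta_{\psi,\mu}(K)|\le\tfrac12\|T_\psi\nu\|_\infty S(K).
\]
This is the ``$\le$'' direction.

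\emph{Lower bound.} It suffices to show that for every even $f\in C(\Sph^{d-1})$,
\[
 \sup_K \frac{1}{S(K)}\Bigl|\int f\,dS_K\Bigr|\ \ge\ \|f\|_\infty .
\]
Pick $v_0$ with $|f(v_0)|=\|f\|_\infty$. For $0<\varepsilon<1$, define
\[
 \eta_\varepsilon=\tfrac{1-\varepsilon}{2}\bigl(\delta_{v_0}+\delta_{-v_0}\bigr)+\varepsilon\,\sigma .
\]
This is a positive finite measure. Its barycenter vanishes, because the two point masses cancel and $\int u\,d\sigma=0$. It is not concentrated on any great subsphere, since the $\sigma$ part charges every open set.

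By \cref{thm:minkowski} there is $K_\varepsilon\in\K^d_\circ$ with $S_{K_\varepsilon}=\eta_\varepsilon$ and $S(K_\varepsilon)=1$. Using evenness of $f$,
\[
 \int f\,dS_{K_\varepsilon}=(1-\varepsilon)f(v_0)+\varepsilon\int f\,d\sigma .
\]
Hence
\[
 \Bigl|\int f\,dS_{K_\varepsilon}\Bigr|\ge(1-2\varepsilon)\|f\|_\infty .
\]
Letting $\varepsilon\to0$ gives the claim. Applying it to $f=T_\psi\nu$ yields ``$\ge$''.

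\emph{Main obstacle.} The only real difficulty is this lower bound. The supremum over convex bodies must reach the sup norm even though no surface area measure is a point mass. The perturbation by $\varepsilon\sigma$ resolves this: it gives full-dimensional support, keeps the centroid zero, and costs only $O(\varepsilon)$. Evenness of $T_\psi\nu$ is used essentially, because the symmetric pair $\delta_{\pm v_0}$ is forced by the centroid condition. Finally, the cases $d\ge2$ and $\|T_\psi\nu\|_\infty=0$ need no separate treatment.
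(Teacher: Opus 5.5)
Your proposal is correct and follows essentially the same route as the paper: the Fubini identity for the upper bound, then Minkowski's existence theorem applied to $\eta_\varepsilon=(1-\varepsilon)\tfrac12(\delta_{v_0}+\delta_{-v_0})+\varepsilon\sigma$ for the reverse inequality. The only difference is minor. You bound the term $\varepsilon\int f\,d\sigma$ crudely by $\varepsilon\|f\|_\infty$, whereas the paper uses that $T_\psi(\mu-\sigma)$ has zero $\sigma$-mean to make that term vanish exactly. Your version therefore proves the slightly more general norming statement for every even continuous $f$.
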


\begin{proof}
Put
\[
 h(v):=T_\psi(\mu-\sigma)(v).
\]
Because $\psi$ is even, $h$ is continuous and even. Since $\mu$ and $\sigma$ have the same total mass,
\[
 \int_{\Sph^{d-1}}h\,d\sigma=0.
\]
Fubini gives
\begin{equation}\label{eq:defect-h}
 \int\cB_\psi K\,d\mu-\int\cB_\psi K\,d\sigma
 =\frac12\int_{\Sph^{d-1}}h(v)\,dS_K(v).
\end{equation}
Hence the left side of \eqref{eq:sharp-defect} is at most $\frac12\|h\|_\infty$.

For the reverse inequality, choose $v_0\in\Sph^{d-1}$ with $|h(v_0)|=\|h\|_\infty$. For $0<\varepsilon<1$ define the even probability measure
\[
 \eta_\varepsilon
 =(1-\varepsilon)\frac{\delta_{v_0}+\delta_{-v_0}}2+\varepsilon\sigma.
\]
It has zero first moment and, because of the $\varepsilon\sigma$ term, is not concentrated on any great subsphere. Minkowski's existence theorem therefore yields $K_\varepsilon\in\K^d_\circ$ with $S_{K_\varepsilon}=\eta_\varepsilon$. In particular, $S(K_\varepsilon)=1$. Using the evenness of $h$ and its zero $\sigma$-mean,
\[
 \int h\,dS_{K_\varepsilon}
 =(1-\varepsilon)h(v_0).
\]
Substitution into \eqref{eq:defect-h} and letting $\varepsilon\downarrow0$ proves the reverse inequality.
\end{proof}

\begin{remark}[Operator-theoretic form]\label{rem:operator-form}
If $\nu=\mu-\sigma$, then \cref{thm:sharp-defect} says that the convex-body defect seminorm is exactly
\[
 \mathfrak D_\psi(\nu):=
 \sup_{K\in\K^d_\circ}\frac{1}{S(K)}
 \left|\int\cB_\psi K\,d\nu\right|
 =\frac12\|T_\psi\nu\|_\infty.
\]
Thus universal exactness is the nullspace condition $\nu\in\ker T_\psi$. The role of convex geometry is to identify the natural test family with the full $L^\infty$ norm on the even range of the zonal operator; the role of harmonic analysis is then to diagonalize that nullspace.
\end{remark}

Universal exactness is therefore equivalent to the vanishing of the potential discrepancy. The next theorem identifies this condition spectrally.

\begin{theorem}[Universal kernel exactness criterion]\label{thm:main-criterion}
Let $\psi\in C([-1,1])$ be even, and let $\mu$ be a finite signed Borel measure on $\Sph^{d-1}$ with $\mu(\Sph^{d-1})=1$.  The following are equivalent.
\begin{enumerate}
\item For every $K\in\K^d_\circ$,
\begin{equation}\label{eq:main-i}
 \int\cB_\psi K\,d\mu
 =\int\cB_\psi K\,d\sigma.
\end{equation}
\item The $\psi$-potential of $\mu$ is constant:
\begin{equation}\label{eq:main-ii}
 T_\psi\mu(v)=\lambda_0(\psi)
 \qquad\forall v\in\Sph^{d-1}.
\end{equation}
\item For every $\ell\ge1$ and every $Y\in\Harm_\ell$,
\begin{equation}\label{eq:main-iii}
 \lambda_\ell(\psi)\int Y\,d\mu=0.
\end{equation}
\item The measure $\mu$ annihilates exactly the active harmonic degrees:
\begin{equation}\label{eq:main-iv}
 \int Y\,d\mu=0
 \qquad\forall Y\in\Harm_\ell,
 \quad \ell\in\Lambda_\psi.
\end{equation}
\end{enumerate}
\end{theorem}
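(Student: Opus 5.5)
I would prove the cycle of equivalences (i)$\Leftrightarrow$(ii)$\Leftrightarrow$(iii)$\Leftrightarrow$(iv), using \cref{thm:sharp-defect} for the geometric step and Funk--Hecke for the spectral step.

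\emph{(i)$\Leftrightarrow$(ii).} By \cref{thm:sharp-defect}, (i) holds if and only if $\|T_\psi(\mu-\sigma)\|_\infty=0$, that is, $T_\psi\mu=T_\psi\sigma$ pointwise; both sides are continuous, so this is an identity on all of $\Sph^{d-1}$. By \eqref{eq:lambda0} and rotation invariance, $T_\psi\sigma(v)=\lambda_0(\psi)$ for every $v$. Hence (i) is equivalent to (ii). Since $\mu(\Sph^{d-1})=1$, (ii) is the same as $T_\psi(\mu-\sigma)=0$.

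\emph{(ii)$\Leftrightarrow$(iii).} The function $g:=T_\psi\mu$ is continuous, and (ii) says that $g$ is constant. A continuous function is constant exactly when all of its harmonic components of degree $\ell\ge1$ vanish, i.e. $\int gY\,d\sigma=0$ for every $Y\in\Harm_\ell$, $\ell\ge1$. This uses the density of spherical polynomials in $C(\Sph^{d-1})$, or completeness of the harmonic decomposition in $L^2$. Fubini and the symmetry of the kernel $\psi(\langle u,v\rangle)$, followed by \eqref{eq:FH}, give
\[
 \int g\,Y\,d\sigma=\int\!\!\int\psi(\langle u,v\rangle)Y(v)\,d\sigma(v)\,d\mu(u)=\lambda_\ell(\psi)\int Y\,d\mu .
\]
So (ii)$\Leftrightarrow$(iii). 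The only care needed is at $\ell=0$: with $Y=1$ this identity gives $\int g\,d\sigma=\lambda_0(\psi)$. So once the higher components vanish, the constant is forced to be $\lambda_0(\psi)$.

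\emph{(iii)$\Leftrightarrow$(iv).} This follows from \cref{def:active}. For $\ell\in\Lambda_\psi$ we can divide by $\lambda_\ell(\psi)\neq0$. For $\ell\notin\Lambda_\psi$ the condition in (iii) is vacuous.

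The main obstacle is already handled by \cref{thm:sharp-defect}, namely that surface area measures detect every value of the even potential. What remains is the completeness statement in the second step, which I would cite as standard rather than reprove.
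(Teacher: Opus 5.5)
Your proposal is correct and follows essentially the same route as the paper. You get (i)$\Leftrightarrow$(ii) from \cref{thm:sharp-defect} together with $T_\psi\sigma\equiv\lambda_0(\psi)$. You get (ii)$\Leftrightarrow$(iii) from Fubini and Funk--Hecke, plus completeness of the harmonic expansion and continuity, with the normalization $\mu(\Sph^{d-1})=1$ fixing the degree-zero term. Finally, (iii)$\Leftrightarrow$(iv) follows from the definition of $\Lambda_\psi$.
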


\begin{proof}
By \cref{thm:sharp-defect}, (i) is equivalent to
\[
 T_\psi(\mu-\sigma)\equiv0,
\]
which is exactly (ii), since $T_\psi\sigma\equiv\lambda_0(\psi)$.

Set $h=T_\psi\mu-\lambda_0(\psi)$. For $Y\in\Harm_\ell$ with $\ell\ge1$, Fubini and the Funk--Hecke relation \eqref{eq:FH} give
\[
 \begin{aligned}
 \int_{\Sph^{d-1}}h(v)Y(v)\,d\sigma(v)
 &=\int_{\Sph^{d-1}}
 \left[\int_{\Sph^{d-1}}\psi(\langle u,v\rangle)Y(v)\,d\sigma(v)\right]d\mu(u)\\
 &=\lambda_\ell(\psi)\int_{\Sph^{d-1}}Y(u)\,d\mu(u).
 \end{aligned}
\]
Hence (ii) implies (iii).  Conversely, if (iii) holds, every nonconstant spherical harmonic coefficient of $h$ vanishes.  Also
\[
 \int h\,d\sigma
 =\lambda_0(\psi)\bigl(\mu(\Sph^{d-1})-1\bigr)=0.
\]
Therefore $h=0$ in $L^2(\sigma)$, and continuity gives $h\equiv0$.  Finally, (iii) and (iv) are equivalent by the definition of $\Lambda_\psi$.
\end{proof}

\begin{corollary}[Finite-spectrum harmonic discrepancy]\label{cor:l2-discrepancy}
Assume that the active spectrum $\Lambda_\psi=T$ is finite. For each $\ell\in T$, let $\{Y_{\ell,r}\}_{r=1}^{h_\ell^{(d)}}$ be an orthonormal basis of $\Harm_\ell$ in $L^2(\sigma)$ and set
\[
 \mathcal E_{2,\psi}(\mu)^2
 :=\sum_{\ell\in T}|\lambda_\ell(\psi)|^2
 \sum_{r=1}^{h_\ell^{(d)}}\left|\int Y_{\ell,r}\,d\mu\right|^2,
 \qquad
 M_T:=\sum_{\ell\in T}h_\ell^{(d)}.
\]
Then
\begin{equation}\label{eq:l2-linf-defect}
 \frac12\mathcal E_{2,\psi}(\mu)
 \le
 \sup_{K\in\K^d_\circ}\frac{|\Delta_{\psi,\mu}(K)|}{S(K)}
 \le
 \frac12\sqrt{M_T}\,\mathcal E_{2,\psi}(\mu).
\end{equation}
In particular, the convex-body defect is quantitatively equivalent, up to the dimension of the active representation space, to the weighted $L^2$ harmonic discrepancy selected by the kernel.
\end{corollary}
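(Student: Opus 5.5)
By \cref{thm:sharp-defect}, the middle quantity in \eqref{eq:l2-linf-defect} equals $\frac12\|h\|_\infty$ with $h:=T_\psi(\mu-\sigma)$. So the corollary reduces to comparing $\|h\|_\infty$ with $\mathcal E_{2,\psi}(\mu)$. The plan is first to identify $\mathcal E_{2,\psi}(\mu)$ with $\|h\|_{L^2(\sigma)}$, and then to apply the elementary inequalities $\|h\|_2\le\|h\|_\infty\le\sqrt{\dim V}\,\|h\|_2$, valid on a finite-dimensional space $V$ spanned by an $L^2$-orthonormal basis.

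\emph{Step 1: harmonic expansion of $h$.} Take $Y=Y_{\ell,r}$. The computation in the proof of \cref{thm:main-criterion}, using Fubini and \eqref{eq:FH}, gives
\[
 \int h\,Y_{\ell,r}\,d\sigma=\lambda_\ell(\psi)\int Y_{\ell,r}\,d\mu .
\]
For $\ell\ge1$ we have $\int Y_{\ell,r}\,d\sigma=0$, so the $\sigma$ part contributes nothing. The constant coefficient also vanishes, because $\int h\,d\sigma=\lambda_0(\psi)(\mu(\Sph^{d-1})-1)=0$. Every inactive degree has zero coefficient because $\lambda_\ell(\psi)=0$ there. Hence
\[
 h=\sum_{\ell\in T}\sum_{r}\lambda_\ell(\psi)\Bigl(\int Y_{\ell,r}\,d\mu\Bigr)Y_{\ell,r}.
\]
The sum is finite, so the identity holds pointwise, using the continuity of $h$. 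Parseval then gives $\|h\|_{L^2(\sigma)}=\mathcal E_{2,\psi}(\mu)$. In particular $h\in V:=\bigoplus_{\ell\in T}\Harm_\ell$, and $\dim V=M_T$.

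\emph{Step 2: lower bound.} Since $\sigma$ is a probability measure, $\|h\|_2\le\|h\|_\infty$. Therefore $\frac12\mathcal E_{2,\psi}(\mu)\le\frac12\|h\|_\infty$, which equals the supremum by \cref{thm:sharp-defect}.

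\emph{Step 3: upper bound.} Write $h=\sum_j c_jY_j$ over the orthonormal basis $\{Y_j\}$ of $V$. By Cauchy--Schwarz, $|h(v)|\le\|c\|_2\bigl(\sum_j Y_j(v)^2\bigr)^{1/2}$. The addition theorem for spherical harmonics gives $\sum_r Y_{\ell,r}(v)^2=h_\ell^{(d)}$ for every $v$: the sum is rotation invariant, hence constant, and integrating against $\sigma$ identifies the constant as $h_\ell^{(d)}$. Summing over $\ell\in T$ yields $\sum_jY_j(v)^2=M_T$, and so $\|h\|_\infty\le\sqrt{M_T}\,\|h\|_2=\sqrt{M_T}\,\mathcal E_{2,\psi}(\mu)$. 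Combining this with \cref{thm:sharp-defect} gives the right-hand inequality in \eqref{eq:l2-linf-defect}.

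The only step needing care is Step 3, specifically the constancy of $\sum_r Y_{\ell,r}(v)^2$. This follows from the invariance of $\Harm_\ell$ under orthogonal transformations: the sum is the reproducing kernel of $\Harm_\ell$ evaluated on the diagonal, which is independent of the chosen orthonormal basis. Everything else is routine.
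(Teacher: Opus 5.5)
Your proof is correct and follows the paper's argument essentially verbatim. You reduce via \cref{thm:sharp-defect} to $\frac12\|h\|_\infty$ with $h=T_\psi(\mu-\sigma)$, expand $h$ over the active harmonic spaces by Funk--Hecke, get $\|h\|_2=\mathcal E_{2,\psi}(\mu)$ from Parseval, and conclude with $\|h\|_2\le\|h\|_\infty\le\sqrt{M_T}\,\|h\|_2$ via Cauchy--Schwarz and the addition theorem.

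One small caveat: your opening claim that $\|h\|_\infty\le\sqrt{\dim V}\,\|h\|_2$ holds on any finite-dimensional space spanned by an $L^2$-orthonormal basis is false in general. It needs the constant diagonal $\sum_j Y_j(v)^2\equiv\dim V$, which is exactly the rotation-invariance fact you correctly supply in Step 3.
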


\begin{proof}
For $h=T_\psi(\mu-\sigma)$, finite spectral support and Funk--Hecke diagonalization give
\[
 h=\sum_{\ell\in T}\lambda_\ell(\psi)
 \sum_{r=1}^{h_\ell^{(d)}}\left(\int Y_{\ell,r}\,d\mu\right)Y_{\ell,r}.
\]
Hence Parseval gives $\|h\|_2=\mathcal E_{2,\psi}(\mu)$. Since $\sigma$ is a probability measure, $\|h\|_\infty\ge\|h\|_2$. Conversely, Cauchy--Schwarz and the addition theorem
\[
 \sum_{r=1}^{h_\ell^{(d)}}Y_{\ell,r}(v)^2=h_\ell^{(d)}
\]
give $\|h\|_\infty\le\sqrt{M_T}\,\|h\|_2$. Apply \cref{thm:sharp-defect}.
\end{proof}

\begin{definition}[Kernel-adapted design]\label{def:kernel-adapted}
A normalized finite measure $\mu$ satisfying \eqref{eq:main-iv} is called a \emph{$\psi$-adapted directional design}.  For equal weights we call its support a \emph{kernel-adapted design}.
\end{definition}

Thus \cref{thm:main-criterion} may be summarized as
\begin{equation}\label{eq:main-slogan}
 \boxed{\quad
 \text{universal exact integral geometry for }\psi
 \iff
 \text{design exactness on }\Lambda_\psi.
 \quad}
\end{equation}

This formulation makes clear that a full spherical design is often stronger than necessary.

\begin{corollary}[Spherical designs are universal finite-spectrum rules]\label{cor:spherical-sufficient}
If $\Lambda_\psi\subset\{1,\ldots,t\}$, then every weighted spherical $t$-design is universally exact for $\psi$.
\end{corollary}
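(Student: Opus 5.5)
This is a direct specialization of \cref{thm:main-criterion}, so the plan is to verify condition (iv) there and read off condition (i). Let $\mu=\sum_j w_j\delta_{x_j}$ be a weighted spherical $t$-design, normalized so that $\mu(\Sph^{d-1})=\sum_j w_j=1$. This normalization is the standing hypothesis of \cref{thm:main-criterion}.

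By the harmonic characterization \eqref{eq:spherical-design-harm}, $\mu$ satisfies
\[
\int Y\,d\mu=0 \quad\text{for all } Y\in\Harm_\ell,\ 1\le\ell\le t.
\]
Since $\Lambda_\psi\subset\{1,\ldots,t\}$ by hypothesis, this gives in particular
\[
\int Y\,d\mu=0 \quad\text{for every } Y\in\Harm_\ell,\ \ell\in\Lambda_\psi.
\]
That is exactly condition \eqref{eq:main-iv}.

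The implication (iv)$\Rightarrow$(i) of \cref{thm:main-criterion} then gives $\int\cB_\psi K\,d\mu=\int\cB_\psi K\,d\sigma$ for every $K\in\K^d_\circ$, which is universal exactness. Equivalently, by \cref{thm:sharp-defect} and \cref{rem:operator-form}, $T_\psi(\mu-\sigma)\equiv0$, so the sharp defect vanishes.

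The only point that needs checking is that the definition of $\Lambda_\psi$ excludes $\ell=0$. That exclusion is what makes the hypothesis $\Lambda_\psi\subset\{1,\ldots,t\}$ meaningful: the degree-zero condition is supplied by the normalization $\mu(\Sph^{d-1})=1$, not by the design property. Because $\psi$ is even, $\Lambda_\psi$ contains only even degrees, so a weighted projective design annihilating the even degrees up to $t$ would already suffice. This refinement is not needed for the statement, so no step presents a real obstacle.
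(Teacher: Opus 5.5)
Your proposal is correct and follows the paper's route: the paper states this corollary without a separate proof, as an immediate consequence of \cref{thm:main-criterion}, (iv)$\Rightarrow$(i), combined with the harmonic characterization \eqref{eq:spherical-design-harm} of weighted spherical designs. Your check that $\ell=0$ is excluded from $\Lambda_\psi$ is also right: this matches the paper's convention $\Lambda_\psi=\{\ell\ge1:\lambda_\ell(\psi)\ne0\}$, used again in \cref{cor:power-spectrum}, and the degree-zero condition is supplied instead by the normalization $\mu(\Sph^{d-1})=1$.
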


\begin{corollary}[Finite-spectrum existence]\label{cor:caratheodory}
Suppose $\Lambda_\psi$ is finite and put
\[
 M_\psi:=\sum_{\ell\in\Lambda_\psi}h_\ell^{(d)}.
\]
Then there exists a positive normalized finite rule
\[
 \mu=\sum_{j=1}^Nw_j\delta_{x_j},\qquad w_j>0,
\]
that is universally exact for $\psi$, with
\begin{equation}\label{eq:caratheodory-bound}
 N\le M_\psi+1.
\end{equation}
\end{corollary}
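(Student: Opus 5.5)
By \cref{thm:main-criterion}, a normalized measure $\mu$ is universally exact for $\psi$ exactly when it annihilates $\Harm_\ell$ for every $\ell\in\Lambda_\psi$. So it suffices to produce a positive atomic probability measure satisfying these finitely many linear moment conditions. The plan is a Carath\'eodory convex-hull argument in the finite-dimensional space spanned by the active harmonics.

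\textbf{Step 1: the moment map.} Fix, for each $\ell\in\Lambda_\psi$, an orthonormal basis $\{Y_{\ell,r}\}_{r=1}^{h_\ell^{(d)}}$ of $\Harm_\ell$. Define the continuous map
\[
 \Phi:\Sph^{d-1}\to\R^{M_\psi},\qquad \Phi(x)=\bigl(Y_{\ell,r}(x)\bigr)_{\ell\in\Lambda_\psi,\,1\le r\le h_\ell^{(d)}} .
\]
A probability measure $\mu$ satisfies \eqref{eq:main-iv} if and only if $\int\Phi\,d\mu=0$.

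\textbf{Step 2: the origin lies in the convex hull of $\Phi(\Sph^{d-1})$.} Since $\Lambda_\psi\subset\{\ell\ge1\}$, each $Y_{\ell,r}$ is orthogonal to constants, so $\int\Phi\,d\sigma=0$. The set $C=\Phi(\Sph^{d-1})$ is compact, because $\Phi$ is continuous on a compact space. Hence its convex hull $\operatorname{conv}C$ is compact; in finite dimensions this holds by Carath\'eodory, as the hull is the continuous image of a compact set of simplex coordinates.

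The barycenter of the pushforward probability measure $\Phi_\#\sigma$ lies in the closed convex hull of $C$, which equals $\operatorname{conv}C$. To justify this, suppose the barycenter were outside $\operatorname{conv}C$. Hahn--Banach would then give a linear functional $L$ with $L(0)>\max_C L$. Integrating $L\circ\Phi$ against $\sigma$ gives $L(0)=\int L\circ\Phi\,d\sigma\le\max_C L$, a contradiction. Therefore $0\in\operatorname{conv}C$.

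\textbf{Step 3: Carath\'eodory.} Carath\'eodory's theorem in $\R^{M_\psi}$ writes
\[
 0=\sum_{j=1}^N w_j\Phi(x_j),\qquad w_j>0,\quad \sum_j w_j=1,\quad N\le M_\psi+1 .
\]
Any zero weights are discarded, which only decreases $N$. Then $\mu=\sum_j w_j\delta_{x_j}$ is a positive normalized finite rule satisfying \eqref{eq:main-iv}, so it is universally exact by \cref{thm:main-criterion}. This gives the bound \eqref{eq:caratheodory-bound}.

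The only point needing care is Step 2, namely that the barycenter lies in the convex hull itself rather than merely its closure; compactness of $\operatorname{conv}C$ settles it. One could even sharpen the bound to $N\le M_\psi$ by the usual boundary argument, but this is not needed.
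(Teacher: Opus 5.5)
Your proposal is correct and follows the paper's own proof. Both build the moment map $\Phi$ from orthonormal bases of the active spaces $\Harm_\ell$, note that $\int\Phi\,d\sigma=0$ lies in the convex hull of $\Phi(\Sph^{d-1})$, and apply Carath\'eodory in $\R^{M_\psi}$ followed by \cref{thm:main-criterion}. Your Step 2, using compactness of the hull and a separating functional, justifies the membership $0\in\operatorname{conv}\Phi(\Sph^{d-1})$ that the paper only asserts. Your closing aside that $N\le M_\psi$ is true, but it rests on connectedness of $\Phi(\Sph^{d-1})$ via the Fenchel--Bunt theorem, not on a boundary argument, since the origin is in fact an interior point of the hull.
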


\begin{proof}
Choose an orthonormal basis of each $\Harm_\ell$, $\ell\in\Lambda_\psi$, and collect all basis functions into a continuous map
\[
 \Phi:\Sph^{d-1}\to\R^{M_\psi}.
\]
Since every nonconstant spherical harmonic has zero $\sigma$-mean,
\[
 0=\int\Phi(u)\,d\sigma(u)
\]
belongs to the convex hull of $\Phi(\Sph^{d-1})$.  Carath\'eodory's theorem gives points $x_j$ and positive weights $w_j$ summing to one, with $N\le M_\psi+1$, such that
\[
 \sum_jw_j\Phi(x_j)=0.
\]
Thus $\mu$ annihilates every active harmonic space, and \cref{thm:main-criterion} applies.
\end{proof}

\begin{remark}
The bound \eqref{eq:caratheodory-bound} is intentionally elementary and generally far from optimal.  The design and cubature literature provides sharper lower and upper bounds in many projective and harmonic-index settings; see \cite{BOT,ZhuEtAl,Lyubich2009}.  Its role here is conceptual: finite spectral support alone guarantees a positive finite exact realization.
\end{remark}

\begin{example}[A sparse kernel needs only a sparse design]\label{ex:sparse}
Let $Z_{2r}$ be any nonzero zonal harmonic kernel of degree $2r$ and set
\[
 \psi(s)=a_0+a_{2r}Z_{2r}(s),\qquad a_{2r}\ne0.
\]
Then $\Lambda_\psi=\{2r\}$.  Universal exactness therefore requires only a harmonic-index $2r$ design, not a spherical $2r$-design.  This is the simplest illustration of spectral economy.
\end{example}

\section{Power-Cauchy kernels and a complete classification}

For $p\ge0$, define
\[
 \psi_p(s)=|s|^p,
 \qquad
 \cB_pK(u):=\cB_{\psi_p}K(u)
 =\frac12\int_{\Sph^{d-1}}|\langle u,v\rangle|^p\,dS_K(v).
\]
For $p=1$, $\cB_1K$ is the classical brightness function.  For $p=2m$, the absolute value disappears and the kernel becomes polynomial.

The zeroth multiplier is the familiar spherical moment
\begin{equation}\label{eq:cdp}
 c_{d,p}:=\lambda_0(\psi_p)
 =\frac{\Gamma(d/2)\Gamma((p+1)/2)}{\sqrt\pi\,\Gamma((d+p)/2)}.
\end{equation}

\begin{proposition}[Funk--Hecke multipliers of $|s|^p$]\label{prop:power-multipliers}
Let $p>-1$.  For every $n\ge0$,
\begin{equation}\label{eq:power-multipliers}
 \lambda_{2n}(\psi_p)
 =c_{d,p}
 \frac{(-1)^n(-p/2)_n}{((d+p)/2)_n},
 \qquad
 \lambda_{2n+1}(\psi_p)=0.
\end{equation}
Here $(a)_n=\Gamma(a+n)/\Gamma(a)$ is the Pochhammer symbol.
\end{proposition}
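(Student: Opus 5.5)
Odd multipliers vanish at once. Since $\psi_p$ is even, for $Y\in\Harm_{2n+1}$ the substitution $v\mapsto -v$ in \eqref{eq:FH} flips the sign of $Y(v)$ and leaves $\psi_p(\langle u,v\rangle)$ unchanged, so $\lambda_{2n+1}(\psi_p)Y(u)=-\lambda_{2n+1}(\psi_p)Y(u)$. Hence $\lambda_{2n+1}(\psi_p)=0$.

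For the even multipliers I will use the one-dimensional Funk--Hecke formula. With $\alpha=(d-2)/2$ and the Gegenbauer polynomial $C_\ell^{\alpha}$ (Legendre for $d=2$, where $\alpha=0$ and the Chebyshev limit is used), our normalization gives
\[
 \lambda_\ell(\psi)=\frac{\Gamma(d/2)}{\sqrt\pi\,\Gamma((d-1)/2)}\int_{-1}^1\psi(s)\frac{C_\ell^{\alpha}(s)}{C_\ell^{\alpha}(1)}(1-s^2)^{\alpha-1/2}\,ds .
\]
This follows by testing \eqref{eq:FH} on the zonal harmonic $Y(v)=C_\ell^\alpha(\langle e,v\rangle)$ at $u=e$. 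The constant in front is the one that makes $\lambda_0$ agree with \eqref{eq:lambda0}. For $p>-1$ the integrand is integrable, so the formula extends from continuous $\psi$.

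To avoid handling Gegenbauer coefficients directly, I will apply Rodrigues' formula,
\[
 C_\ell^\alpha(s)(1-s^2)^{\alpha-1/2}=\kappa_{\ell}\,\frac{d^\ell}{ds^\ell}(1-s^2)^{\ell+\alpha-1/2},
\]
and integrate by parts $\ell=2n$ times. The boundary terms vanish at $\pm1$ because of the factor $(1-s^2)^{\ell+\alpha-1/2}$. Since $|s|^p$ is not smooth at $0$ when $p$ is not an even integer, I will instead work on $[0,1]$ using evenness, or regularize with $(s^2+\epsilon)^{p/2}$ and let $\epsilon\to0$. The derivatives of $s^p$ produce $p(p-1)\cdots(p-2n+1)$, which is expressed through $(-p/2)_n$ and $((1-p)/2)_n$ by the duplication formula. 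The remaining integral $\int_0^1 s^{p-2n}(1-s^2)^{2n+\alpha-1/2}\,ds$ is a Beta function.

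There is a cleaner route that avoids boundary terms at $0$. Expand $C^\alpha_{2n}(s)$ in powers $s^{2n-2k}$ with its explicit hypergeometric coefficients and compute each moment
\[
 \int_{-1}^1|s|^{p+2j}(1-s^2)^{\alpha-1/2}\,ds=B\!\left(\tfrac{p+2j+1}2,\alpha+\tfrac12\right).
\]
The finite sum then becomes a terminating ${}_3F_2$ or ${}_2F_1$ at argument $1$, which Chu--Vandermonde (or Saalschütz) evaluates in closed form. Equivalently, $C^\alpha_{2n}(s)/C^\alpha_{2n}(1)$ is a multiple of ${}_2F_1(-n,n+\alpha;\alpha+\tfrac12;1-s^2)$. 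After the change of variable $x=s^2$, the integral becomes
\[
 \int_0^1 x^{(p-1)/2}(1-x)^{\alpha-1/2}\,{}_2F_1(-n,n+\alpha;\alpha+\tfrac12;1-x)\,dx .
\]
Term-by-term Beta integration gives $c_{d,p}\,{}_3F_2\!\left(-n,n+\alpha,\alpha+\tfrac12;\alpha+\tfrac12,\tfrac{d+p}2;1\right)$. The repeated parameter cancels, leaving ${}_2F_1\!\left(-n,n+\tfrac{d-2}2;\tfrac{d+p}2;1\right)$, and Chu--Vandermonde gives
\[
 {}_2F_1\!\left(-n,n+\tfrac{d-2}2;\tfrac{d+p}2;1\right)=\frac{\bigl(\tfrac{p+2}2-n\bigr)_n}{\bigl(\tfrac{d+p}2\bigr)_n}.
\]
Finally, $(\tfrac{p+2}{2}-n)_n=(-1)^n(-p/2)_n$, which is the claimed formula.

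The main obstacle is getting the hypergeometric representation of the normalized Gegenbauer polynomial in the variable $1-s^2$ (a quadratic transformation) with exactly the right parameters. I would verify it on $n=1$, where both sides can be computed directly. That check is $\lambda_2(s^2)=c_{d,2}\cdot\frac{1}{(d+2)/2}\cdot 1\ne0$, and it should be consistent with $\lambda_2(\psi_2)$ obtained from $s^2=\frac1d+\bigl(s^2-\frac1d\bigr)$.
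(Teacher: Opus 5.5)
Your proof is correct. The computation you actually carry out differs from the paper's in one genuine choice: the variable in which the even Gegenbauer polynomial is expanded.

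The paper writes $C^\alpha_{2n}(\sqrt y)$ through $P_n^{(\alpha-1/2,-1/2)}(2y-1)$ as a multiple of ${}_2F_1(-n,n+\alpha;\tfrac12;y)$, that is, in powers of $y=s^2$. Termwise integration against $y^{(p-1)/2}(1-y)^{\alpha-1/2}$ then gives the balanced series ${}_3F_2(-n,n+\alpha,\tfrac{p+1}2;\tfrac12,\tfrac{d+p}2;1)$. That series needs Pfaff--Saalsch\"utz, and the factor $C^\alpha_{2n}(1)$ must be cancelled separately at the end.

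You instead expand the \emph{normalized} polynomial in powers of $1-s^2$. Each Beta integral then contributes $(\alpha+\tfrac12)_k/(\tfrac{d+p}2)_k$, the repeated parameter cancels, and Chu--Vandermonde suffices. The prefactor $c_{d,p}$ appears at once as $B(\tfrac{p+1}2,\alpha+\tfrac12)/B(\tfrac12,\alpha+\tfrac12)$, and the reflection $(\tfrac{p+2}2-n)_n=(-1)^n(-p/2)_n$ finishes the computation.

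A further gain is that your formula is uniform in $d\ge2$. At $\alpha=0$ the normalized identity reads ${}_2F_1(-n,n;\tfrac12;\sin^2\theta)=\cos 2n\theta$ with $s=\cos\theta$. So the case $d=2$, which the paper only asserts via a Fourier cosine integral, needs no separate argument.

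The representation you single out as the main obstacle is an exact equality, not merely equality up to a constant. For $\alpha>0$ it follows from three standard facts:
\begin{itemize}
\item the paper's relation $C^\alpha_{2n}(s)=\frac{(\alpha)_n}{(1/2)_n}P_n^{(\alpha-1/2,-1/2)}(2s^2-1)$;
\item the expansion $P_n^{(a,b)}(x)=\frac{(a+1)_n}{n!}\,{}_2F_1\bigl(-n,n+a+b+1;a+1;\tfrac{1-x}2\bigr)$;
\item the value $C^\alpha_{2n}(1)=\frac{(\alpha)_n(\alpha+1/2)_n}{n!\,(1/2)_n}$.
\end{itemize}
It extends to $\alpha=0$ by continuity.

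Two small corrections. Legendre polynomials belong to $d=3$, not $d=2$; for $d=2$ the normalized limit is Chebyshev, as you otherwise say. You were also right to discard the Rodrigues route you sketched first. Once $2n\ge p+1$, the integral $\int_0^1 s^{p-2n}(1-s^2)^{2n+\alpha-1/2}\,ds$ diverges and the boundary terms at $s=0$ become singular. That route would therefore need analytic continuation in $p$ or a careful treatment of the origin.
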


\begin{proof}
Odd multipliers vanish because $\psi_p$ is even.  We give the Gegenbauer calculation for $d\ge3$ and note that the same formula in $d=2$ follows by the corresponding Fourier cosine integral.

Put $\alpha=(d-2)/2$.  With normalized spherical measure, the Funk--Hecke formula gives
\begin{equation}\label{eq:FH-gegen}
 \lambda_{2n}(\psi_p)
 =\frac{1}{B(1/2,\alpha+1/2)\,C_{2n}^{\alpha}(1)}
 \int_{-1}^1|s|^pC_{2n}^{\alpha}(s)(1-s^2)^{\alpha-1/2}\,ds.
\end{equation}
The quadratic change of variables $y=s^2$ and the identity
\begin{equation}\label{eq:gegen-jacobi}
 C_{2n}^{\alpha}(\sqrt y)
 =\frac{(\alpha)_n}{(1/2)_n}
 P_n^{(\alpha-1/2,-1/2)}(2y-1)
\end{equation}
reduce the integral to a beta--hypergeometric integral.  Using
\[
 P_n^{(\alpha-1/2,-1/2)}(2y-1)
 =(-1)^n\frac{(1/2)_n}{n!}
 {}_2F_1\!\left(\begin{matrix}-n,n+\alpha\\[1mm]1/2\end{matrix};y\right)
\]
and integrating the terminating series termwise yields a balanced ${}_3F_2(1)$.  Saalsch\"utz's summation gives
\begin{align}\label{eq:gegen-integral}
 &\int_{-1}^1|s|^pC_{2n}^{\alpha}(s)(1-s^2)^{\alpha-1/2}\,ds\\
 &\quad=
 B\!\left(\frac{p+1}{2},\alpha+\frac12\right)
 \frac{(-1)^n(\alpha)_n(\alpha+1/2)_n(-p/2)_n}
 {n!(1/2)_n(\alpha+p/2+1)_n}.
\end{align}
Finally,
\[
 C_{2n}^{\alpha}(1)
 =\frac{(\alpha)_n(\alpha+1/2)_n}{n!(1/2)_n},
\]
so substitution into \eqref{eq:FH-gegen}, together with
\[
 \alpha+\frac p2+1=\frac{d+p}{2},
\]
gives \eqref{eq:power-multipliers}.  The quotient of beta functions for $n=0$ is exactly \eqref{eq:cdp}.
\end{proof}

\begin{corollary}[Active spectrum of power kernels]\label{cor:power-spectrum}
Let $p\ge0$.
\begin{enumerate}
\item If $p=2m$ with $m\in\N$, then
\begin{equation}\label{eq:active-even-power}
 \Lambda_{\psi_{2m}}=\{2,4,\ldots,2m\}.
\end{equation}
\item If $p>0$ and $p\notin2\N$, then
\begin{equation}\label{eq:active-noneven-power}
 \Lambda_{\psi_p}=\{2,4,6,\ldots\}.
\end{equation}
\item For $p=0$, the active spectrum is empty.
\end{enumerate}
\end{corollary}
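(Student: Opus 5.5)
The plan is to read off the active spectrum directly from the closed form in \cref{prop:power-multipliers}. Odd multipliers vanish, so $\Lambda_{\psi_p}\subset\{2,4,\dots\}$, and for $n\ge1$ we only need to decide when
\[
 \lambda_{2n}(\psi_p)=c_{d,p}\frac{(-1)^n(-p/2)_n}{((d+p)/2)_n}
\]
is nonzero. Three factors must be checked: the prefactor $c_{d,p}$, the denominator $((d+p)/2)_n$, and the numerator $(-p/2)_n$.

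The prefactor and denominator are harmless. For $p\ge0$, the constant $c_{d,p}=\lambda_0(\psi_p)=\int|\langle u,e\rangle|^p\,d\sigma(u)$ is strictly positive. This follows either because the integrand is nonnegative and not a.e.\ zero, or directly from \eqref{eq:cdp}, since all Gamma arguments are positive. The denominator is the product $\prod_{k=0}^{n-1}\bigl((d+p)/2+k\bigr)$, in which every factor is at least $d/2>0$, so it is finite and nonzero. Hence $\lambda_{2n}(\psi_p)\neq0$ if and only if
\[
 (-p/2)_n=\prod_{k=0}^{n-1}\Bigl(k-\tfrac p2\Bigr)\neq0,
\]
that is, if and only if $p/2\notin\{0,1,\dots,n-1\}$.

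The three cases now follow directly.
\begin{itemize}
\item If $p=2m$ with $m\ge1$, the product vanishes exactly when $n-1\ge m$, i.e.\ when $n\ge m+1$. So $\lambda_{2n}\ne0$ precisely for $1\le n\le m$, which gives \eqref{eq:active-even-power}.
\item If $p>0$ with $p\notin2\N$, then $p/2$ is never a nonnegative integer. No factor vanishes for any $n\ge1$, and $\Lambda_{\psi_p}$ is all positive even integers, which is \eqref{eq:active-noneven-power}.
\item If $p=0$, the $k=0$ factor already vanishes for every $n\ge1$. So the active spectrum is empty, consistent with $\psi_0\equiv1$ being constant.
\end{itemize}

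No step is a real obstacle once \cref{prop:power-multipliers} is granted. The only points needing care are confirming that the denominator never vanishes for $p\ge0$ and correctly tracking the vanishing index of the Pochhammer symbol at nonpositive integers.

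The one worry is the case $d=2$, where \cref{prop:power-multipliers} was only sketched. There I would verify the formula directly if needed. One has $\lambda_{2n}=\frac1\pi\int_0^\pi|\cos\theta|^p\cos(2n\theta)\,d\theta$, a standard Beta-type integral, and its nonvanishing for non-even $p$ can be read off from $\Gamma$-function expressions whose poles and zeros occur only at the same integer values.
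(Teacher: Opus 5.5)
Your proposal is correct and is essentially the paper's argument: by \cref{prop:power-multipliers}, whether $\lambda_{2n}(\psi_p)$ vanishes is decided by the Pochhammer factor $(-p/2)_n$. Your explicit checks that $c_{d,p}>0$ and $((d+p)/2)_n\neq0$ for $p\ge0$ fill in a step the paper leaves implicit.
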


\begin{proof}
If $p=2m$, then $(-p/2)_n=(-m)_n$ is nonzero for $1\le n\le m$ and vanishes for $n>m$.  If $p>0$ is not an even integer, $-p/2$ is not a nonpositive integer, so $(-p/2)_n\ne0$ for every $n\ge1$.  The case $p=0$ is constant.
\end{proof}

We now obtain the exactness classification.

\begin{theorem}[Power-Cauchy exactness classification]\label{thm:power-classification}
Let $\mu$ be a finite signed normalized directional rule on $\Sph^{d-1}$.
\begin{enumerate}
\item If $p=2m$, $m\ge1$, then
\begin{equation}\label{eq:power-exact-iff}
 \int\cB_{2m}K\,d\mu
 =\int\cB_{2m}K\,d\sigma
 \quad\forall K\in\K^d_\circ
\end{equation}
holds if and only if
\begin{equation}\label{eq:projective-moments}
 \int Y\,d\mu=0
 \qquad\forall Y\in\Harm_{2r},\quad 1\le r\le m.
\end{equation}
Equivalently, the antipodal symmetrization of $\mu$ is a weighted real projective $m$-design.
\item If $p>0$ and $p\notin2\N$, then no finite signed normalized directional rule can satisfy universal exactness.
\end{enumerate}
\end{theorem}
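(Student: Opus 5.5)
Both parts follow by feeding \cref{cor:power-spectrum} into \cref{thm:main-criterion}. For part (i), with $p=2m$, \cref{cor:power-spectrum} gives $\Lambda_{\psi_{2m}}=\{2,4,\ldots,2m\}$. The equivalence (i)$\Leftrightarrow$(iv) of \cref{thm:main-criterion} then turns \eqref{eq:power-exact-iff} into \eqref{eq:projective-moments} directly. For the projective reformulation, let $\tilde\mu=\frac12(\mu+\mu\circ(-\Id))$ be the antipodal symmetrization. Each $Y\in\Harm_{2r}$ is even, so $\int Y\,d\tilde\mu=\int Y\,d\mu$. Odd harmonics integrate to zero against $\tilde\mu$ automatically. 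Hence $\tilde\mu$ annihilates $\Harm_\ell$ for all $1\le \ell\le 2m$ if and only if \eqref{eq:projective-moments} holds. Since polynomials of projective degree at most $m$ pull back to even polynomials of degree at most $2m$, this is the weighted real projective $m$-design condition recalled in Section~2.

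For part (ii), let $p>0$ with $p\notin2\N$. By \cref{cor:power-spectrum}, $\Lambda_{\psi_p}$ consists of all positive even integers. Suppose a finite rule $\mu=\sum_{j=1}^N w_j\delta_{x_j}$ with $\sum_j w_j=1$ were universally exact. By \cref{thm:main-criterion}, $\int Y\,d\mu=0$ for every $Y\in\Harm_{2n}$ and every $n\ge1$. Pass to $\tilde\mu$ as above; it is still a finite signed atomic measure of total mass $1$. It now annihilates every $\Harm_\ell$ with $\ell\ge1$: the even degrees by hypothesis, the odd degrees by symmetry. Its harmonic coefficients therefore coincide with those of $\sigma$ in every degree, so $\int f\,d\tilde\mu=\int f\,d\sigma$ for every spherical polynomial $f$. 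Polynomials are dense in $C(\Sph^{d-1})$, and both sides are continuous linear functionals, so $\tilde\mu=\sigma$ as measures. This is impossible, because $\tilde\mu$ is supported on at most $2N$ points while $\sigma$ is non-atomic; for instance, test against a nonnegative continuous bump supported off those points with positive $\sigma$-integral.

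The only step needing care is this final one: the implication from agreement on all harmonics to equality of measures. That uses uniqueness for finite signed Borel measures via the Riesz representation theorem and Stone--Weierstrass density, both standard. The argument in fact shows more: no finite signed measure with an atomic part, and indeed none other than one with $\tilde\mu=\sigma$, can be universally exact for non-even $p$.
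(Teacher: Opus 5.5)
Your proof is correct and follows the paper's argument essentially step for step: part (i) combines the active spectrum $\{2,\ldots,2m\}$ with (i)$\Leftrightarrow$(iv) of the exactness criterion, and part (ii) uses antipodal symmetrization, density of spherical polynomials, and the atomic-versus-non-atomic contradiction; your explicit check of the projective reformulation spells out a step the paper leaves implicit. One caveat concerns only your closing aside, which is false for signed measures, because antipodal atoms can cancel under symmetrization: $\mu=\sigma+\delta_x-\delta_{-x}$ has total mass one and $T_\psi(\mu-\sigma)\equiv0$ for every even $\psi$, so it is universally exact despite having atoms, and only your reformulation ``$\tilde\mu=\sigma$'' is correct.
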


\begin{proof}
Part (i) is immediate from \cref{thm:main-criterion,cor:power-spectrum}.

For part (ii), universal exactness would imply
\[
 \int Y\,d\mu=0
 \qquad\forall Y\in\Harm_{2r},\quad r\ge1.
\]
Let
\[
 \mu^{\sym}:=\frac12(\mu+(-\Id)_\#\mu).
\]
Then $\mu^{\sym}$ has total mass one, annihilates every nonconstant even spherical harmonic, and automatically annihilates every odd harmonic.  Hence $\mu^{\sym}$ and $\sigma$ agree on all spherical polynomials.  By density of spherical polynomials in $C(\Sph^{d-1})$, they are the same finite signed measure:
\[
 \mu^{\sym}=\sigma.
\]
The left side is finite atomic, while $\sigma$ is non-atomic, a contradiction.
\end{proof}

\begin{remark}[Relation to classical projective cubature]
The finite-dimensional cubature condition in \cref{thm:power-classification}(i) is part of the established theory connecting projective designs, spherical designs, and isometric embeddings into finite-dimensional $\ell_p$ spaces; see \cite{LyubichVaserstein,Lyubich2009}.  The contribution here is the universal convex-body interpretation supplied by \cref{thm:main-criterion}: the same projective moment condition is exactly what is needed to replace the continuous directional integral for the whole class of surface area measures.
\end{remark}

Combining \cref{prop:continuous-average} with \eqref{eq:cdp} gives explicit surface-area formulas.

\begin{corollary}[Exact finite power-Cauchy surface-area formula]\label{cor:surface-power}
Let $m\ge1$, and let $\mu=\sum_{j=1}^Nw_j\delta_{x_j}$ satisfy \eqref{eq:projective-moments}.  Then for every $K\in\K^d_\circ$,
\begin{equation}\label{eq:surface-power}
 \boxed{
 S(K)
 =\frac{2(d/2)_m}{(1/2)_m}
 \sum_{j=1}^Nw_j\cB_{2m}K(x_j).
 }
\end{equation}
In particular, every spherical $2m$-design gives an equal-weight exact rule.
\end{corollary}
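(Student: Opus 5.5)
The plan is to combine the exactness statement of \cref{thm:power-classification}(i) with the continuous average of \cref{prop:continuous-average}, and then evaluate the constant $c_{d,2m}$ from \eqref{eq:cdp} in Pochhammer form.

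First, since $\mu$ satisfies \eqref{eq:projective-moments}, \cref{thm:power-classification}(i) (equivalently \cref{thm:main-criterion} together with \cref{cor:power-spectrum}) gives, for every $K\in\K^d_\circ$,
\[
 \sum_{j=1}^Nw_j\cB_{2m}K(x_j)=\int\cB_{2m}K\,d\mu=\int\cB_{2m}K\,d\sigma .
\]
Applying \cref{prop:continuous-average} with $\psi=\psi_{2m}$ and $\lambda_0(\psi_{2m})=c_{d,2m}$, the right side equals $\tfrac12 c_{d,2m}S(K)$. Solving for $S(K)$ yields $S(K)=\frac{2}{c_{d,2m}}\sum_jw_j\cB_{2m}K(x_j)$. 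Note that $c_{d,2m}>0$, because it is the $\sigma$-average of the nonnegative, not identically zero function $|\langle u,e\rangle|^{2m}$, so the division is legitimate.

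The only computation is to identify $1/c_{d,2m}$. From \eqref{eq:cdp},
\[
 c_{d,2m}=\frac{\Gamma(d/2)\Gamma(m+1/2)}{\Gamma(1/2)\Gamma(d/2+m)} .
\]
Using $\Gamma(m+1/2)=\Gamma(1/2)(1/2)_m$ and $\Gamma(d/2+m)=\Gamma(d/2)(d/2)_m$, this simplifies to $c_{d,2m}=(1/2)_m/(d/2)_m$. Substituting gives \eqref{eq:surface-power}.

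For the final sentence, let $\mu$ be the equal-weight measure on a spherical $2m$-design. By \eqref{eq:spherical-design-harm}, it annihilates $\Harm_\ell$ for $1\le\ell\le 2m$, and in particular $\Harm_{2r}$ for $1\le r\le m$. So \eqref{eq:projective-moments} holds, which is also \cref{cor:spherical-sufficient} with $t=2m$. Nothing here is a real obstacle; the only care needed is in the Gamma-function bookkeeping for $c_{d,2m}$.
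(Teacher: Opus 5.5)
Your proof is correct and is the paper's argument: you combine universal exactness from \cref{thm:power-classification}(i) with \cref{prop:continuous-average}, then evaluate $c_{d,2m}=(1/2)_m/(d/2)_m$. As in the paper, you implicitly use the normalization $\sum_j w_j=1$, which \cref{thm:power-classification} requires in order to apply.
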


\begin{proof}
For $p=2m$,
\[
 c_{d,2m}=\frac{(1/2)_m}{(d/2)_m}.
\]
Use universal exactness and \cref{prop:continuous-average}.
\end{proof}

\section{The quadratic case and node economy}

The case $m=1$ gives the simplest illustration of spectral economy. The exactness condition reduces to the familiar isotropic-frame identity, from which the minimal positive rule follows immediately.

\begin{proposition}[Quadratic exactness and isotropic frames]\label{prop:quad-frame}
Let
\[
 \mu=\sum_{j=1}^Nw_j\delta_{x_j},\qquad \sum_jw_j=1.
\]
Then universal exactness for $\psi_2(s)=s^2$ is equivalent to
\begin{equation}\label{eq:tight-frame}
 \sum_{j=1}^Nw_jx_jx_j^\top=\frac1dI_d.
\end{equation}
\end{proposition}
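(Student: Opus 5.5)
The plan is to reduce the statement to \cref{thm:power-classification}(i) with $m=1$. That result says universal exactness for $\psi_2$ is equivalent to
\[
 \int Y\,d\mu=0\qquad\forall Y\in\Harm_2 .
\]
So it remains to show that, for a normalized atomic $\mu=\sum_jw_j\delta_{x_j}$, annihilating $\Harm_2$ is the same as the matrix identity \eqref{eq:tight-frame}.

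I would describe $\Harm_2$ concretely. Its elements are exactly the restrictions of quadratic forms $Y_A(x)=x^\top A x$ with $A$ symmetric and $\operatorname{tr}A=0$, because $\Delta(x^\top Ax)=2\operatorname{tr}A$. Thus the condition reads
\[
 \sum_jw_j\,x_j^\top A x_j=\operatorname{tr}(AM)=0
 \qquad\text{for all symmetric traceless }A,
 \qquad M:=\sum_jw_jx_jx_j^\top .
\]
The symmetric matrix $M$ is orthogonal, in the trace inner product, to all traceless symmetric matrices. The orthogonal complement of the traceless symmetric matrices within the symmetric ones is $\R I_d$, so $M=cI_d$. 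Since the $x_j$ are unit vectors and $\sum_j w_j=1$, we get $\operatorname{tr}M=\sum_jw_j|x_j|^2=1$, hence $c=1/d$.

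Conversely, if $M=\frac1dI_d$, then $\operatorname{tr}(AM)=\frac1d\operatorname{tr}A=0$ for every traceless $A$, so $\mu$ annihilates $\Harm_2$. \Cref{thm:main-criterion} (or \cref{thm:power-classification}(i)) then gives universal exactness.

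Nothing here is a serious obstacle. The only point needing care is the identification of $\Harm_2$ with traceless quadratic forms, including the fact that every element of $\Harm_2$ has this form. This follows from the dimension count $h_2^{(d)}=\binom{d+1}{2}-1$ in \eqref{eq:dim-harm}, which matches the dimension of the space of traceless symmetric matrices.
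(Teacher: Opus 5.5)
Your proof is correct and follows essentially the paper's route. You reduce to \cref{thm:power-classification}(i) with $m=1$ and identify annihilation of $\Harm_2$ with the second-moment condition. The paper phrases this via the decomposition of a quadratic into $c|x|^2$ plus a harmonic part, and matching $\int uu^\top\,d\sigma=\frac1d I_d$; you phrase it as trace-orthogonality of $M$ to the traceless symmetric matrices.

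The dimension count is not actually needed. Every homogeneous quadratic is $x^\top Ax$ with $A$ symmetric, and $\Delta(x^\top Ax)=2\operatorname{tr}A$, so $\Harm_2$ is exactly the traceless forms directly from the definition.
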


\begin{proof}
A homogeneous quadratic polynomial decomposes into a constant multiple of $|x|^2$ and a harmonic quadratic.  Thus annihilation of $\Harm_2$ is equivalent to equality of the second moment matrix with the rotationally invariant second moment
\[
 \int_{\Sph^{d-1}}uu^\top\,d\sigma(u)=\frac1dI_d.
\]
Apply \cref{thm:power-classification}.
\end{proof}

\begin{theorem}[Minimal positive quadratic rule]\label{thm:minimal-quadratic}
Suppose $w_j>0$ and \eqref{eq:tight-frame} holds.  Then $N\ge d$.  If $N=d$, then
\[
 w_1=\cdots=w_d=\frac1d
\]
and $x_1,\ldots,x_d$ form an orthonormal basis, up to signs.  Conversely, every orthonormal basis with weights $1/d$ gives an exact rule.
\end{theorem}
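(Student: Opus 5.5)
The plan is to work directly with the frame operator identity \eqref{eq:tight-frame}, using only linear algebra and trace arguments. Write $F:=\sum_{j=1}^N w_jx_jx_j^\top=\frac1dI_d$.

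\emph{Lower bound.} The matrix $F$ is a sum of $N$ rank-one matrices, so $\operatorname{rank}F\le N$. Since $F=\frac1dI_d$ has rank $d$, we get $N\ge d$. Positivity of the weights is not needed for this step, only the rank count.

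\emph{Equality case $N=d$.} The rank argument shows that $x_1,\ldots,x_d$ span $\R^d$, so they form a basis. Let $X$ be the matrix with columns $x_j$ and $W=\operatorname{diag}(w_j)$. Then $XWX^\top=\frac1dI_d$, and since $X$ is invertible,
\[
 W=\tfrac1d X^{-1}X^{-\top}=\tfrac1d (X^\top X)^{-1}.
\]
Hence $X^\top X=\frac1dW^{-1}$ is diagonal. This makes the $x_j$ pairwise orthogonal, and their diagonal entries $|x_j|^2=1$ force $\frac1{dw_j}=1$, i.e. $w_j=1/d$. As a cross-check, taking traces gives $\sum_jw_j|x_j|^2=1$, which is consistent.

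The subtle point is the phrase ``up to signs.'' Because $x_j$ and $-x_j$ give the same rank-one matrix, the conclusion should be read as saying that the lines $[x_j]$ are mutually orthogonal. The argument above already shows that the vectors themselves are orthonormal, so nothing more is needed.

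\emph{Converse.} If $x_1,\ldots,x_d$ is an orthonormal basis, then $\sum_j x_jx_j^\top=I_d$ by completeness. Taking $w_j=1/d$ gives \eqref{eq:tight-frame}, and \cref{prop:quad-frame} yields universal exactness.

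The only step needing care is the equality case. The key is to turn $XWX^\top$ into a statement about the Gram matrix via invertibility. If we wanted to avoid inverses, an alternative would be to compare traces of $F$ and $F^2$, which for a rank-one sum gives a Cauchy--Schwarz equality forcing orthogonality. I expect the inverse argument to be the cleaner route.
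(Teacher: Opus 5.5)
Your proof is correct and is essentially the paper's argument. The paper uses the same rank count for $N\ge d$. For $N=d$ it rescales the columns to $A=[\sqrt{dw_1}x_1\ \cdots\ \sqrt{dw_d}x_d]$, so that $AA^\top=I_d$ forces $A^\top A=I_d$; this is the same step as your passage from $XWX^\top=\frac1d I_d$ to $X^\top X=\frac1d W^{-1}$ via invertibility of the square matrix $X$. Your inverse-based version avoids the square roots, so it also shows that the hypothesis $w_j>0$ is not needed when $N=d$: $W=\frac1d(X^\top X)^{-1}$ is automatically positive definite.
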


\begin{proof}
The matrix on the right side of \eqref{eq:tight-frame} has rank $d$, whereas the left side is a sum of $N$ rank-one matrices, so $N\ge d$.

Assume $N=d$ and set
\[
 A=\begin{bmatrix}\sqrt{dw_1}x_1&\cdots&\sqrt{dw_d}x_d\end{bmatrix}.
\]
Then \eqref{eq:tight-frame} says $AA^\top=I_d$.  Since $A$ is square, it is orthogonal.  Hence each column has norm one, so $dw_j=1$ for every $j$, and the vectors $x_j$ are orthonormal.  The converse is immediate.
\end{proof}

\begin{corollary}[An exact $d$-direction surface-area identity]\label{cor:d-directions}
Let $e_1,\ldots,e_d$ be an orthonormal basis of $\R^d$.  Then for every convex body $K\in\K^d_\circ$,
\begin{equation}\label{eq:d-direction-formula}
 \boxed{
 S(K)=2\sum_{j=1}^d\cB_2K(e_j).
 }
\end{equation}
Among positive weighted rules, $d$ unoriented directions are minimal.
\end{corollary}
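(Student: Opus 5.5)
The plan is to specialize \cref{cor:surface-power} to $m=1$, using the orthonormal-basis rule provided by \cref{thm:minimal-quadratic}. Take $x_j=e_j$ and $w_j=1/d$ for $j=1,\dots,d$. Then
\[
 \sum_{j=1}^d \frac1d e_je_j^\top=\frac1d I_d,
\]
so \eqref{eq:tight-frame} holds. By \cref{prop:quad-frame}, this rule annihilates $\Harm_2$, which is exactly condition \eqref{eq:projective-moments} with $m=1$.

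Next apply \eqref{eq:surface-power} with $m=1$. The constant is
\[
 \frac{2(d/2)_1}{(1/2)_1}=\frac{2\cdot d/2}{1/2}=2d,
\]
so
\[
 S(K)=2d\sum_{j=1}^d\frac1d\,\cB_2K(e_j)=2\sum_{j=1}^d\cB_2K(e_j),
\]
which is \eqref{eq:d-direction-formula}. As an independent check, we can compute directly without the design machinery. We have
\[
 \sum_j\cB_2K(e_j)=\frac12\int\sum_j\langle e_j,v\rangle^2\,dS_K(v)=\frac12\int |v|^2\,dS_K(v)=\frac12 S(K),
\]
since $|v|=1$ on the sphere. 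This confirms the constant 2 and shows the identity holds for all $K\in\K^d_\circ$.

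For the minimality claim, let $\mu=\sum_j w_j\delta_{x_j}$ with $w_j>0$ be any positive normalized rule that is universally exact for $\psi_2$. By \cref{prop:quad-frame}, exactness forces \eqref{eq:tight-frame}, and \cref{thm:minimal-quadratic} then gives $N\ge d$. This counts atoms, so we should also check that it counts unoriented directions. If two atoms $x_i$ and $x_j=-x_i$ are antipodal, they contribute the same rank-one matrix $x_ix_i^\top$. Merging them into one atom with the summed weight leaves \eqref{eq:tight-frame} unchanged and keeps the weights positive. The number of distinct lines is therefore at least $d$, by the same rank argument: the span of $\{x_jx_j^\top\}$ must produce a rank-$d$ matrix, so the $x_j$ span $\R^d$.

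There is no genuine obstacle here. The only points needing care are the arithmetic of the constant $2(d/2)_1/(1/2)_1=2d$ and stating precisely that the minimality is over positive weights. Signed rules are excluded from the minimality claim because the rank argument fails when negative weights are allowed.
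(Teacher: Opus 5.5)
Your proof is correct and is essentially the paper's: specialize \cref{cor:surface-power} to $m=1$ with weights $1/d$ on an orthonormal basis (the paper simply notes $c_{d,2}=1/d$), and get minimality from \cref{thm:minimal-quadratic}. Your direct check via $\sum_j\langle e_j,v\rangle^2=1$ and your merging of antipodal atoms to count lines rather than atoms are sound additions.

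One closing aside is wrong, though. The bound $N\ge d$ does not use positivity, because a sum of $N$ rank-one matrices $w_jx_jx_j^\top$ has rank at most $N$ whatever the signs of the $w_j$, so the rank argument does not fail for signed rules. In fact, for $N=d$, writing $X=[x_1\,\cdots\,x_d]$ and $W=\operatorname{diag}(w_1,\ldots,w_d)$, the identity $XWX^\top=\frac1dI_d$ forces $W=\frac1d(X^\top X)^{-1}$ to be positive definite.
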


\begin{proof}
Since $c_{d,2}=1/d$, \cref{cor:surface-power} with weights $1/d$ gives \eqref{eq:d-direction-formula}.  Minimality follows from \cref{thm:minimal-quadratic}.
\end{proof}

\begin{example}[Three dimensions]
For $d=3$ and any mutually orthogonal unit vectors $e_1,e_2,e_3$,
\[
 S(K)=2\bigl(\cB_2K(e_1)+\cB_2K(e_2)+\cB_2K(e_3)\bigr).
\]
Thus three unoriented directions recover the surface area of every convex body exactly from quadratic normal moments.  The functional $\cB_2$ should not be confused with ordinary projected area: it is the quadratic analogue of the Cauchy brightness functional.
\end{example}

\section{Classical Cauchy brightness: impossibility and controlled approximation}

For $\psi(s)=|s|$, the Cauchy projection formula gives
\begin{equation}\label{eq:cauchy-brightness}
 \cB_1K(u)=\operatorname{vol}_{d-1}(K|u^\perp).
\end{equation}
The continuous average is
\begin{equation}\label{eq:cauchy-average}
 \int_{\Sph^{d-1}}\cB_1K(u)\,d\sigma(u)
 =\frac{c_{d,1}}2S(K),
 \qquad
 c_{d,1}=\frac{\Gamma(d/2)}{\sqrt\pi\,\Gamma((d+1)/2)}.
\end{equation}
By \cref{thm:power-classification}, no finite signed rule reproduces \eqref{eq:cauchy-average} for every convex body.  We now quantify what a spherical design does achieve.

For $\psi\in C([-1,1])$ define the best uniform polynomial approximation error
\begin{equation}\label{eq:Et}
 E_t(\psi)_\infty
 :=\inf_{q\in\mathcal P_t([-1,1])}
 \|\psi-q\|_{L^\infty([-1,1])}.
\end{equation}

\begin{theorem}[Uniform design approximation]\label{thm:uniform-approx}
Let $X_N\subset\Sph^{d-1}$ be a spherical $t$-design, and write
\[
 Q_XF:=\frac1N\sum_{x\in X_N}F(x).
\]
Then for every even $\psi\in C([-1,1])$ and every $K\in\K^d_\circ$,
\begin{equation}\label{eq:uniform-approx}
 \left|
 Q_X(\cB_\psi K)
 -\int_{\Sph^{d-1}}\cB_\psi K\,d\sigma
 \right|
 \le E_t(\psi)_\infty\,S(K).
\end{equation}
\end{theorem}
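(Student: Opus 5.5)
The plan is to reduce the estimate to the defect identity \eqref{eq:defect-h} from the proof of \cref{thm:sharp-defect} and then use a polynomial approximant of $\psi$ that the design integrates exactly. Let $\mu=\frac1N\sum_{x\in X_N}\delta_x$, which is a normalized probability measure. Fubini gives
\[
 Q_X(\cB_\psi K)-\int\cB_\psi K\,d\sigma=\frac12\int_{\Sph^{d-1}}h(v)\,dS_K(v),\qquad h(v)=\frac1N\sum_{x\in X_N}\psi(\langle x,v\rangle)-\int\psi(\langle u,v\rangle)\,d\sigma(u).
\]
It therefore suffices to prove $\|h\|_\infty\le 2E_t(\psi)_\infty$, since then the left side is at most $\frac12\|h\|_\infty S(K)$.

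To bound $h$, fix $q\in\mathcal P_t([-1,1])$ and write $\psi=q+(\psi-q)$. For each fixed $v$, the function $u\mapsto q(\langle u,v\rangle)$ is a polynomial of degree at most $t$ on $\R^d$ restricted to the sphere. By the spherical $t$-design property, its discrete average over $X_N$ equals its $\sigma$-average, so the $q$-part of $h(v)$ vanishes identically. What remains is
\[
 h(v)=\frac1N\sum_{x\in X_N}(\psi-q)(\langle x,v\rangle)-\int(\psi-q)(\langle u,v\rangle)\,d\sigma(u).
\]
Each of the two terms is an average against a probability measure of a function bounded by $\|\psi-q\|_\infty$. Hence $|h(v)|\le 2\|\psi-q\|_\infty$, and taking the infimum over $q$ gives $\|h\|_\infty\le 2E_t(\psi)_\infty$. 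Combining this with the $\tfrac12$ in the Fubini identity yields exactly \eqref{eq:uniform-approx}.

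The only point needing care is the constant. The naive triangle inequality produces the factor $2$, and this is cancelled exactly by the factor $\tfrac12$ built into the definition \eqref{eq:Bpsi} of $\cB_\psi$. Evenness of $\psi$ is not needed for this argument beyond the standing assumptions; it matters only for the geometric interpretation. The existence of a best approximant, or the use of an infimum, is routine. For the Cauchy kernel, Jackson's theorem gives $E_t(|\cdot|)_\infty=O(t^{-1})$, which explains the relative rate claimed in the introduction.
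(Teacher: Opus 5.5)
Your proof is correct and is the paper's argument: design exactness on $u\mapsto q(\langle u,v\rangle)$ gives the pointwise bound $2\|\psi-q\|_\infty$ on the potential discrepancy. The factor $\tfrac12$, integration against $S_K$ and the infimum over $q$ then give the estimate, and your routing through the Fubini identity \eqref{eq:defect-h} just makes explicit the paper's ``multiply by $1/2$ and integrate'' step.
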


\begin{proof}
Fix $v\in\Sph^{d-1}$ and let $q$ be any polynomial of degree at most $t$.  The function $u\mapsto q(\langle u,v\rangle)$ is a spherical polynomial of degree at most $t$, hence the design is exact on it.  Therefore
\[
 \left|
 \frac1N\sum_{x\in X_N}\psi(\langle x,v\rangle)
 -\int\psi(\langle u,v\rangle)\,d\sigma(u)
 \right|
 \le2\|\psi-q\|_\infty.
\]
Multiply by $1/2$, integrate with respect to $S_K(v)$, and take the infimum over $q$.
\end{proof}

If $\lambda_0(\psi)\ne0$, define the design-based surface-area estimator
\begin{equation}\label{eq:surface-estimator-general}
 \widehat S_{\psi,X}(K)
 :=\frac{2}{\lambda_0(\psi)}Q_X(\cB_\psi K).
\end{equation}
Then \cref{prop:continuous-average,thm:uniform-approx} imply
\begin{equation}\label{eq:relative-general}
 \frac{|\widehat S_{\psi,X}(K)-S(K)|}{S(K)}
 \le\frac{2E_t(\psi)_\infty}{|\lambda_0(\psi)|}.
\end{equation}

\begin{corollary}[Uniform approximation of the Cauchy surface-area formula]\label{cor:cauchy-approx}
Let $X_N$ be a spherical $t$-design and define
\begin{equation}\label{eq:cauchy-estimator}
 \widehat S_X(K)
 :=\frac{2}{c_{d,1}}\frac1N\sum_{x\in X_N}\operatorname{vol}_{d-1}(K|x^\perp).
\end{equation}
Then
\begin{equation}\label{eq:cauchy-relative}
 \boxed{
 \frac{|\widehat S_X(K)-S(K)|}{S(K)}
 \le\frac{2E_t(|\cdot|)_\infty}{c_{d,1}}
 }
\end{equation}
for every convex body $K$.
In particular,
\begin{equation}\label{eq:cauchy-Ot}
 \sup_{K\in\K^d_\circ}
 \frac{|\widehat S_X(K)-S(K)|}{S(K)}
 =O_d(t^{-1}).
\end{equation}
\end{corollary}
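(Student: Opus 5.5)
The plan is to derive the boxed bound as a direct specialization of \cref{thm:uniform-approx} and \eqref{eq:relative-general} to $\psi(s)=|s|$, and then prove the rate \eqref{eq:cauchy-Ot} with a classical Jackson-type estimate for $E_t(|\cdot|)_\infty$.

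\textbf{Step 1: the relative bound \eqref{eq:cauchy-relative}.} By the Cauchy projection formula \eqref{eq:cauchy-brightness}, $\cB_1K(x)=\operatorname{vol}_{d-1}(K|x^\perp)$. Hence $\widehat S_X(K)=\frac{2}{\lambda_0(|\cdot|)}Q_X(\cB_1K)$ with $\lambda_0(|\cdot|)=c_{d,1}>0$ by \eqref{eq:cdp}, so $\widehat S_X(K)=\widehat S_{|\cdot|,X}(K)$ in the notation of \eqref{eq:surface-estimator-general}. \cref{prop:continuous-average} gives $\frac{2}{c_{d,1}}\int\cB_1K\,d\sigma=S(K)$. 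Therefore
\[
\widehat S_X(K)-S(K)=\frac{2}{c_{d,1}}\Bigl(Q_X(\cB_1K)-\int\cB_1K\,d\sigma\Bigr).
\]
\cref{thm:uniform-approx} bounds the bracket by $E_t(|\cdot|)_\infty S(K)$. Dividing by $S(K)>0$ gives the boxed inequality, uniformly in $K\in\K^d_\circ$.

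\textbf{Step 2: the rate \eqref{eq:cauchy-Ot}.} Since $c_{d,1}$ depends only on $d$, it suffices to show $E_t(|\cdot|)_\infty\le C/t$ with an absolute constant $C$. I would use Jackson's theorem: $|s|$ is $1$-Lipschitz on $[-1,1]$, so $E_t(|\cdot|)_\infty\le C/t$.

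For a self-contained argument, substitute $s=\cos\theta$. Then $|\cos\theta|$ is an even, $1$-Lipschitz, $2\pi$-periodic function. The trigonometric Jackson theorem gives an even cosine polynomial of degree $\le t$ within $C/t$ of it, and this cosine polynomial is an algebraic polynomial of degree $\le t$ in $s$.

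\textbf{Main obstacle.} The only nontrivial input is the Jackson estimate; everything else is bookkeeping already done in \cref{thm:uniform-approx}.

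If an explicit approach is wanted, I would try truncating a Chebyshev expansion, but the coefficients of $|s|$ decay only like $k^{-2}$. Plain truncation therefore gives an $O(t^{-1})$ tail, which still suffices. Concretely, the tail is
\[
\sum_{k>t/2}\frac{4}{\pi(4k^2-1)}=O(t^{-1}),
\]
bounded using $|T_{2k}|\le1$. This makes the rate fully explicit.
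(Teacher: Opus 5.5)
Your proposal is correct and matches the paper's proof: the boxed bound is \cref{thm:uniform-approx} and \eqref{eq:relative-general} specialized to $\psi(s)=|s|$ with $\lambda_0(|\cdot|)=c_{d,1}$, and the rate comes from Jackson's estimate $E_t(|\cdot|)_\infty\le C/t$. Your Chebyshev-truncation remark is a useful explicit supplement, since the tail telescopes and gives $E_t(|\cdot|)_\infty\le 2/(\pi t)$.
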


\begin{proof}
Only the last statement needs comment.  Jackson's theorem gives $E_t(|\cdot|)_\infty\le C/t$; see, e.g., \cite{DeVoreLorentz}.  Substitute into \eqref{eq:cauchy-relative}.
\end{proof}

\begin{remark}[Spectral meaning]
For $|s|$, every positive even harmonic multiplier is nonzero.  A spherical $t$-design removes all active modes of degree at most $t$ exactly; the approximation error is therefore the unresolved high-frequency tail.  This is the precise sense in which exact finite integral geometry passes continuously into spectral approximation beyond the exactness threshold.
\end{remark}

\section{Exact projection moments for submanifolds}

The same finite-isotropy mechanism is not restricted to convex bodies.  Let $M\subset\R^d$ be a compact countably $k$-rectifiable set with finite $\mathcal H^k$ measure, $1\le k\le d-1$, and let $T_xM$ denote the approximate tangent $k$-plane for $\mathcal H^k$-a.e. $x$.

For $u\in\Sph^{d-1}$ let $P_{u^\perp}$ be orthogonal projection onto $u^\perp$ and let
\[
 J_k(P_{u^\perp}|_{T_xM})
\]
be its $k$-dimensional Jacobian.  Define
\begin{equation}\label{eq:Jkm}
 \mathcal J_{k,m}(M;u)
 :=\int_M J_k(P_{u^\perp}|_{T_xM})^{2m}\,d\mathcal H^k(x).
\end{equation}

\begin{theorem}[Exact finite projection-moment identity]\label{thm:projection-moment}
Let $m\ge1$, and let $\mu=\sum_jw_j\delta_{x_j}$ be any normalized rule annihilating $\Harm_2,\ldots,\Harm_{2m}$.  Then
\begin{equation}\label{eq:projection-moment}
 \boxed{
 \sum_jw_j\mathcal J_{k,m}(M;x_j)
 =\frac{((d-k)/2)_m}{(d/2)_m}\,\mathcal H^k(M).
 }
\end{equation}
In particular, every spherical $2m$-design and every weighted real projective $m$-design satisfies \eqref{eq:projection-moment}.
\end{theorem}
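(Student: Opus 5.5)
The identity reduces to a pointwise statement about a single $k$-plane, followed by Fubini over $M$. Fix an $\mathcal H^k$-a.e.\ point $x\in M$ and put $V=T_xM$, a $k$-dimensional subspace, with orthogonal complement $W=V^\perp$ of dimension $d-k$. The first step, which I expect to be the only real obstacle, is the Jacobian identity
\[
 J_k\bigl(P_{u^\perp}|_V\bigr)^2=1-|P_Vu|^2=|P_Wu|^2,\qquad u\in\Sph^{d-1}.
\]
To prove it, take an orthonormal basis $\{f_i\}$ of $V$. Then $P_{u^\perp}f_i=f_i-\langle u,f_i\rangle u$, so the Gram matrix of the images is
\[
 G=I_k-aa^\top,\qquad a_i=\langle u,f_i\rangle .
\]
Here I use $|u|=1$, which makes the cross terms combine to $-a_ia_j$. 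The squared Jacobian is $\det G=1-|a|^2=1-|P_Vu|^2$ by the rank-one determinant formula. The identity $1-|P_Vu|^2=|P_Wu|^2$ follows from $|u|^2=|P_Vu|^2+|P_Wu|^2$.

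Consequently $J_k(P_{u^\perp}|_V)^{2m}=|P_Wu|^{2m}=\bigl(u^\top P_Wu\bigr)^m$. This is the restriction to the sphere of a homogeneous polynomial of degree $2m$ in $u$. It therefore decomposes into harmonic components in $\Harm_0,\Harm_2,\dots,\Harm_{2m}$ only. A normalized rule $\mu$ that annihilates $\Harm_2,\dots,\Harm_{2m}$ has total mass one, so it reproduces the $\Harm_0$ part and hence integrates this function exactly:
\[
 \sum_jw_j\,|P_Wx_j|^{2m}=\int_{\Sph^{d-1}}|P_Wu|^{2m}\,d\sigma(u).
\]

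The second step evaluates the right-hand side. By rotation invariance we may take $W=\mathrm{span}(e_1,\dots,e_{d-k})$. Under $\sigma$, the quantity $|P_Wu|^2=\sum_{i\le d-k}u_i^2$ is $\mathrm{Beta}\bigl(\tfrac{d-k}2,\tfrac k2\bigr)$ distributed. This can be seen by writing $u=g/|g|$ with $g$ a standard Gaussian vector and using the chi-square ratio. Its $m$-th moment is
\[
 \frac{\bigl(\tfrac{d-k}{2}\bigr)_m}{\bigl(\tfrac d2\bigr)_m},
\]
which is independent of $x$. As a consistency check, for $k=d-1$ this gives $(1/2)_m/(d/2)_m=c_{d,2m}$, matching \eqref{eq:cdp}.

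Finally, sum over $j$ and integrate over $M$. Since the sum is finite and $x\mapsto T_xM$ is $\mathcal H^k$-measurable with $J_k\le1$, interchanging the sum with $\int_M\cdot\,d\mathcal H^k$ is harmless. This yields \eqref{eq:projection-moment}. The closing sentence of the theorem follows because spherical $2m$-designs and weighted projective $m$-designs annihilate $\Harm_2,\dots,\Harm_{2m}$, by \eqref{eq:spherical-design-harm} and the projective correspondence.
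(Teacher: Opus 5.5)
Your proof is correct and follows the paper's argument essentially step for step. The Gram matrix $I_k-aa^\top$ and the determinant lemma give $J_k^2=1-|P_Vu|^2$; the resulting even polynomial of degree $2m$ is integrated exactly by $\mu$; the $\operatorname{Beta}\bigl(\tfrac{d-k}{2},\tfrac k2\bigr)$ moment supplies the constant; and one then integrates over $M$. Your rewriting of the integrand as the homogeneous polynomial $(u^\top P_Wu)^m$ with $W=V^\perp$ is only a cosmetic improvement, making the claim that only harmonic degrees $0,2,\dots,2m$ occur immediate.
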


\begin{proof}
Fix a $k$-plane $E\subset\R^d$ and choose an orthonormal basis $e_1,\ldots,e_k$.  The Gram matrix of the projected vectors $P_{u^\perp}e_a$ is
\[
 G=I_k-zz^\top,
 \qquad
 z=(\langle e_1,u\rangle,\ldots,\langle e_k,u\rangle)^\top.
\]
The matrix determinant lemma gives
\[
 J_k(P_{u^\perp}|_E)^2
 =\det(I_k-zz^\top)
 =1-\|P_Eu\|^2.
\]
Hence
\[
 J_k(P_{u^\perp}|_E)^{2m}
 =(1-\|P_Eu\|^2)^m,
\]
a spherical polynomial of degree at most $2m$ containing only even harmonic degrees.  Our moment assumptions therefore give exact integration over $\mu$.

For $U$ uniform on $\Sph^{d-1}$,
\[
 \|P_EU\|^2\sim\operatorname{Beta}\!\left(\frac k2,\frac{d-k}{2}\right),
\]
so
\[
 \int_{\Sph^{d-1}}(1-\|P_Eu\|^2)^m\,d\sigma(u)
 =\frac{((d-k)/2)_m}{(d/2)_m}.
\]
The constant is independent of $E$.  Apply the pointwise identity to $E=T_xM$ and integrate over $M$.
\end{proof}

\begin{corollary}[Hypersurface version]\label{cor:hypersurface}
If $M$ is a rectifiable hypersurface with unit normal $n(x)$ defined a.e., then
\begin{equation}\label{eq:hypersurface}
 \sum_jw_j\int_M|\langle x_j,n(x)\rangle|^{2m}\,d\mathcal H^{d-1}(x)
 =\frac{(1/2)_m}{(d/2)_m}\,\mathcal H^{d-1}(M).
\end{equation}
\end{corollary}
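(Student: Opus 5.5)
The plan is to obtain \cref{cor:hypersurface} as the special case $k=d-1$ of \cref{thm:projection-moment}. Two things have to be checked: that the integrand $\mathcal J_{d-1,m}(M;u)$ is the normal moment appearing in \eqref{eq:hypersurface}, and that the constant specializes correctly.

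First, fix $x$ at which the approximate tangent hyperplane $E=T_xM$ exists with unit normal $n=n(x)$. The proof of \cref{thm:projection-moment} shows that
\[
 J_{d-1}(P_{u^\perp}|_E)^2=1-\|P_Eu\|^2 .
\]
Since $E=n^\perp$, we have $\|P_Eu\|^2=1-\langle u,n\rangle^2$, and therefore
\[
 J_{d-1}(P_{u^\perp}|_E)=|\langle u,n\rangle| .
\]
Raising this to the power $2m$ and integrating over $M$ against $\mathcal H^{d-1}$ gives
\[
 \mathcal J_{d-1,m}(M;u)=\int_M|\langle u,n(x)\rangle|^{2m}\,d\mathcal H^{d-1}(x).
\]
This holds $\mathcal H^{d-1}$-a.e. in $x$, which suffices because the moment is defined by integration. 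The sign ambiguity of $n$ plays no role, since only $|\langle u,n\rangle|$ enters.

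Second, substituting $k=d-1$ turns the constant in \eqref{eq:projection-moment} into $((d-k)/2)_m/(d/2)_m=(1/2)_m/(d/2)_m$, which is exactly the constant in \eqref{eq:hypersurface}.

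As a sanity check, this constant equals $c_{d,2m}$ from \cref{cor:surface-power}, as it should: $|\langle u,n\rangle|^{2m}=\psi_{2m}(\langle u,n\rangle)$ has $\sigma$-average $\lambda_0(\psi_{2m})$. So the argument could equally be run directly. The function $u\mapsto\langle u,n\rangle^{2m}$ is a polynomial of degree $2m$ containing only even harmonic degrees, so it is integrated exactly by $\mu$ because $\mu$ annihilates $\Harm_2,\ldots,\Harm_{2m}$; one then integrates over $M$ using Fubini, justified since the integrand is bounded and $\mathcal H^{d-1}(M)<\infty$.

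I do not expect a real obstacle. The only point needing care is the measure-theoretic one: $n(x)$ must be defined $\mathcal H^{d-1}$-a.e. as the normal to the approximate tangent plane, which is exactly the standing hypothesis inherited from the rectifiable setting of \cref{thm:projection-moment}.
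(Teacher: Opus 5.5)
Your proposal is correct and follows the paper's proof: you set $k=d-1$ in \cref{thm:projection-moment} and use that the $(d-1)$-Jacobian of $P_{u^\perp}$ on $T_xM=n(x)^\perp$ is $|\langle u,n(x)\rangle|$, so the constant becomes $(1/2)_m/(d/2)_m$. Your explicit verification of the Jacobian via $1-\|P_Eu\|^2=\langle u,n\rangle^2$, and your consistency check against $c_{d,2m}$, fill in details the paper states without proof.
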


\begin{proof}
For a hypersurface, the $(d-1)$-Jacobian of projection onto $u^\perp$ is $|\langle u,n(x)\rangle|$.  Set $k=d-1$ in \cref{thm:projection-moment}.
\end{proof}

For $M=\partial K$, \eqref{eq:hypersurface} is precisely the geometric content behind the power-Cauchy formula \eqref{eq:surface-power}.

\section{Discussion and outlook}

The main conclusion of the paper is that exact finite directional integral geometry is governed by the nullspace and spectral support of a zonal convolution operator. The sharp identity \eqref{eq:sharp-defect} reduces the worst-case error over all convex bodies to an $L^\infty$ potential discrepancy, while \cref{cor:l2-discrepancy} links that geometric defect to a weighted $L^2$ harmonic discrepancy when the active spectrum is finite. Funk--Hecke diagonalization then converts vanishing of the discrepancy into the kernel-adapted moment conditions in \cref{thm:main-criterion}. In particular, a spherical design is a universal device for finite-spectrum kernels, but it can impose substantially more conditions than necessary.

The evenness assumption has a geometric meaning: unoriented directions form the projective space
\[
 \RP^{d-1}=\Sph^{d-1}/\{u\sim-u\}.
\]
For the power kernel $|\langle u,v\rangle|^{2m}$, the directional observable has projective polynomial degree $m$, and \cref{thm:power-classification} identifies universal exactness precisely with weighted projective $m$-design exactness. By contrast, non-even powers have full positive even spectrum and therefore cannot be represented by any finite atomic directional rule. The classical Cauchy brightness kernel $|s|$ is the basic example: exact finite realization fails, while \cref{cor:cauchy-approx} quantifies how polynomial approximation and spherical designs recover the continuous formula asymptotically.

This perspective is complementary to the classical theory of multiplier transforms and geometric tomography \cite{Gardner,Kiderlen,GYY}, to the theory of harmonic-index and projective designs \cite{BOT,ZhuEtAl,Lyubich2009,Lindblad}, and to work on $L_p$ cosine transforms and isotropic measures \cite{LiXiZhang}. The emphasis here is the exact finite replacement of an invariant directional average for the entire class of convex bodies, together with the sharp defect formula that makes the convex-geometric quantifier ``for every $K$'' completely explicit.

Santal\'o's kinematic formulas live on larger homogeneous spaces, and the same question can be posed there: which invariant Crofton, Kubota or kinematic averages admit exact finite realization? Grassmannian designs suggest the appropriate algebraic objects for projection and section formulas, while designs or cubatures on compact groups should enter genuine kinematic formulas. The present directional theory isolates the first layer of this broader program, which we refer to as \emph{Exact Finite Integral Geometry}.

\section*{Data availability}
No datasets were generated or analysed in this study. All mathematical results and proofs are contained in the article.

\end{document}